\newcommand{\R}{\mathbb{R}}
\newcommand{\N}{\mathbb{N}}
\newcommand{\ep}{\varepsilon}
\newcommand{\pa}{\partial}
\DeclareMathOperator{\supp}{supp}
\DeclareMathOperator{\lifespan}{LifeSpan}
\newcommand{\lr}[1]{{}\langle{}#1{}\rangle{}}
\newtheorem{theorem}{Theorem}[section]
\newtheorem{lemma}[theorem]{Lemma}
\newtheorem{proposition}[theorem]{Proposition}
\theoremstyle{remark}
\newtheorem{remark}{Remark}[section]
\theoremstyle{definition}
\newtheorem{definition}{Definition}[section]
\numberwithin{equation}{section}
\def\@cite#1#2{[{{\bfseries #1}\if@tempswa , #2\fi}]}
\begin{document}
\begin{center}
\Large{{\bf
Life-span of blowup solutions to semilinear wave equation 
\\with space-dependent critical damping 
}}
\end{center}

\vspace{5pt}

\begin{center}
Masahiro Ikeda%
\footnote{
Department of Mathematics, Faculty of Science and Technology, Keio University, 3-14-1 Hiyoshi, Kohoku-ku, Yokohama, 223-8522, Japan/Center for Advanced Intelligence Project, RIKEN, Japan, 
E-mail:\ {\tt masahiro.ikeda@keio.jp/masahiro.ikeda@riken.jp}}
and
Motohiro Sobajima%
\footnote{
Department of Mathematics, 
Faculty of Science and Technology, Tokyo University of Science,  
2641 Yamazaki, Noda-shi, Chiba, 278-8510, Japan,  
E-mail:\ {\tt msobajima1984@gmail.com}}
\end{center}

\newenvironment{summary}{\vspace{.5\baselineskip}\begin{list}{}{%
     \setlength{\baselineskip}{0.85\baselineskip}
     \setlength{\topsep}{0pt}
     \setlength{\leftmargin}{12mm}
     \setlength{\rightmargin}{12mm}
     \setlength{\listparindent}{0mm}
     \setlength{\itemindent}{\listparindent}
     \setlength{\parsep}{0pt}
     \item\relax}}{\end{list}\vspace{.5\baselineskip}}
\begin{summary}
{\footnotesize {\bf Abstract.}
This paper is concerned with the blowup phenomena for 
initial value problem of semilinear wave equation 
with critical space-dependent damping term
\begin{equation}\label{DW}
\tag*{(DW:$V_0$)}
\begin{cases}
\pa_t^2 u(x,t) -\Delta u(x,t) + V_0|x|^{-1}\pa_t u(x,t)=|u(x,t)|^p, 
& (x,t)\in \R^N \times (0,T),
\\
u(x,0)=\ep f(x), 
& x\in \R^N,
\\ 
\pa_t u(x,0)=\ep g(x),  
& x\in \R^N,
\end{cases}
\end{equation}
where $N\geq 3$, $V_0\in [0,\frac{(N-1)^2}{N+1})$, 
$f$ and $g$ are compactly supported smooth functions 
and $\ep>0$ is a small parameter. The main result of the present paper 
is to give a solution of \ref{DW} and 
to provide a sharp estimate for lifespan for such a solution when 
$\frac{N}{N-1}<p\leq p_S(N+V_0)$, where $p_S(N)$ 
is the Strauss exponent for (DW:$0$). 
The main idea of the proof is due to the technique of test functions 
for (DW:$0$) originated by Zhou--Han (2014, MR3169791). 
Moreover, we find a new threshold value $V_0=\frac{(N-1)^2}{N+1}$ 
for the coefficient of critical and singular damping $|x|^{-1}$.
}
\end{summary}

{\footnotesize{\it Mathematics Subject Classification}\/ (2010): Primary: 35L70.}

{\footnotesize{\it Key words and phrases}\/: %
wave equation with singular damping, 
Small data blow up, Strauss exponent, 
Critical and subcritical case, 
the Gauss hypergeometric functions
.}

\section{Introduction}
In this paper we consider the blowup phenomena for 
initial value problem of semilinear 
wave equation with scale-invariant damping term of space-dependent type as follows:
\begin{equation}\label{ndw}
\begin{cases}
\pa_t^2 u(x,t) -\Delta u(x,t) + a(x)\pa_t u(x,t)=|u(x,t)|^p, 
& (x,t)\in \R^N \times (0,T),
\\
u(x,0)=\ep f(x), 
& x\in \R^N,
\\ 
\pa_t u(x,0)=\ep g(x),  
& x\in \R^N, 
\end{cases}
\end{equation}
where $N\geq 3$, $a(x)=V_0|x|^{-1}$ $(V_0\geq 0)$, 
$\ep>0$ is a small parameter
and 
$f,g$ 
are smooth nonnegative functions satisfying $g\not\equiv 0$ with 
\[
{\rm supp}(f,g)\subset \overline{B(0,R_0)}=\{x\in\R^N\;;\;|x|\leq R_0\}
\]
for some $R_0>0$. 
Note that by taking $u_\lambda(x,t)=\lambda^{\frac{2}{p-1}}u(\lambda x,\lambda t)$ with $\lambda=R_0$, 
we can always assume $R_0=1$ without loss of generality.

The study of blowup phenomena for \eqref{ndw} with $N=3$ and $V_0=0$ 
was initially started by F. John in \cite{John79} for $1<p<1+\sqrt{2}$. 
Strauss conjectured in \cite{Strauss81} that 
the number $p_0(N)$ given by the positive root of 
the quadratic equation
\[
(N-1)p^2-(N+1)p-2=0
\]
is the threshold for dividing 
the following two situations:
blowup phenomena at a finite time
for arbitrary small initial data 
and global existence of small solutions.
The conjecture of Strauss was completely 
solved until Yordanov--Zhang \cite{YZ06} and 
Zhou \cite{Zhou07}. 

After that the lifespan of solutions to 
nonlinear wave equations (\eqref{ndw} with $V_0=0$)
with small initial data
has been considered by many authors. 
If $1<p<p_0(N)$, then 
by 
Sideris \cite{Sideris84} 
and Di Pomponio--Georgiev \cite{DG01}
we have the two-sided estimates 
for lifespan of solution with small initial data as 
\[
c\ep^{\frac{2p(p-1)}{2+(N+1)p-(N-1)p^2}+\delta}
\leq 
\lifespan(u)
\leq 
C\ep^{\frac{2p(p-1)}{2+(N+1)p-(N-1)p^2}}
\]
with arbitrary small $\delta>0$. 
For the critical case $p=p_0(N)$, 
Takamura--Wakasa \cite{TW11} succeeded 
in proving sharp upper bound of 
lifespan 
\[
\exp[c\ep^{-p(p-1)}]
\leq 
\lifespan(u)
\leq 
\exp[C\ep^{-p(p-1)}]
\]
for remaining case $N=4$, and by 
\cite{TW11} the study of 
the lifespan for blowup solutions 
to nonlinear wave equations with small data 
has been completed (for the other contributions see e.g, \cite{TW11} and its references therein). 
In the connection to the previous paper,
we have to remark that Zhou--Han \cite{ZH14} 
gave a short proof for verifying the sharp upper 
bound of lifespan by using an estimate 
established in \cite{YZ06} 
and a kind of test functions 
including the Gauss hypergeometric functions. 

In this paper, we mainly deal with the problem 
\eqref{ndw} with $N\geq 3$ and $V_0>0$. 
Because of the strong singularity of damping term at the origin, 
the study of \eqref{ndw} has not been considered so far. 
Since the problem has a scaling-invariant structure, 
one can expect that some threshold for $V_0$ appears.

The first purpose of this paper is 
to clarify the local wellposedness of \eqref{ndw} for 
$1<p<\frac{N-2}{N-4}$ in solutions in $H^2(\R^N)$. 
The second is to show an upper bound of the lifespan 
of solutions to \eqref{ndw} with respect to small parameter 
$\ep>0$ 
and to pose a threshold number for $V_0$ 
dividing completely different situations. 

The first assertion of this paper is for 
local wellposedness of \eqref{ndw}. 

\begin{proposition}\label{prop:solve}
Let $N\geq 3$, $V_0\geq  0$ and 
\begin{align*}
\begin{cases}
1<p<\infty &\text{if}\ N=3,4
\\
1<p<\frac{N-2}{N-4} &\text{if}\ N\geq 5.
\end{cases}
\end{align*}
For every $(f,g)\in H^2(\R^N)\cap H^1(\R^N)$ and $\ep>0$, 
there exist $T=T(\|f\|_{H^2},\|g\|_{H^1},\ep)>0$ and 
a unique strong solution of \eqref{ndw} in the following class:
\[
u\in S_T=C^2([0,T];L^2(\R^N))\cap C^1([0,T];H^1(\R^N))\cap C([0,T];H^2(\R^N))
\]
Moreover, one has for every $t\geq 0$, 
\[
\supp u(t)\subset \overline{B(0,R_0+t)}.
\]
\end{proposition}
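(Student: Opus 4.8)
The plan is to reformulate \eqref{ndw} as an abstract semilinear Cauchy problem for the pair $U=(u,\pa_t u)$ and to solve it by a fixed-point argument built on the linear propagator, the entire construction resting on Hardy's inequality $\||x|^{-1}v\|_{L^2}\le\frac{2}{N-2}\|\nabla v\|_{L^2}$ (valid for $N\ge 3$), which is what tames the singular coefficient $V_0|x|^{-1}$. Writing $\mathcal{A}=\begin{pmatrix}0&I\\ \Delta&-V_0|x|^{-1}\end{pmatrix}$ with domain $D(\mathcal{A})=H^2(\R^N)\times H^1(\R^N)$ on the energy space $X=H^1(\R^N)\times L^2(\R^N)$, Hardy's inequality guarantees $|x|^{-1}v\in L^2$ whenever $v\in H^1$, so that the second equation forces $\Delta u\in L^2$, i.e.\ $u\in H^2$; thus $D(\mathcal{A})$ is exactly $H^2\times H^1$ and the damping sits at the correct derivative level rather than being a genuine singular perturbation of the wave operator.

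First I would develop the linear theory via the Lumer--Phillips theorem. Equipping $X$ with an energy-type inner product, a direct computation shows $\mathrm{Re}\,\langle\mathcal{A}U,U\rangle_X\le\omega\|U\|_X^2-V_0\||x|^{-1/2}v\|_{L^2}^2$ for some $\omega\ge 0$ and all $U=(u,v)\in D(\mathcal{A})$, so $\mathcal{A}-\omega$ is dissipative for every $V_0\ge 0$ (the sign of the damping is favorable, and no smallness of $V_0$ is needed here). For the range condition I would solve $(\lambda-\mathcal{A})U=(F_1,F_2)$ for large $\lambda>0$; eliminating $v=\lambda u-F_1$ reduces this to the singular elliptic equation $(\lambda^2-\Delta+\lambda V_0|x|^{-1})u=F_2+\lambda F_1+V_0|x|^{-1}F_1$, whose right-hand side lies in $L^2$ by Hardy's inequality. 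The associated bilinear form is coercive on $H^1$ (all three terms being nonnegative), so Lax--Milgram yields a unique $H^1$ solution and elliptic regularity upgrades it to $H^2$; hence $\mathrm{range}(\lambda-\mathcal{A})=X$ and $\mathcal{A}$ generates a $C_0$-semigroup $\{e^{t\mathcal{A}}\}$ on $X$, which restricts to a $C_0$-semigroup on $Y=D(\mathcal{A})$ in the graph norm.

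Next I would solve the nonlinear problem through the Duhamel formula $U(t)=e^{t\mathcal{A}}U_0+\int_0^t e^{(t-s)\mathcal{A}}\mathcal{F}(U(s))\,ds$ with $U_0=(\ep f,\ep g)\in Y$ and $\mathcal{F}(u,v)=(0,|u|^p)$, by contraction in $C([0,T];Y)$ for $T$ small. The point where the hypothesis on $p$ enters is the claim that $u\mapsto|u|^p$ is locally Lipschitz from $H^2$ into $H^1$: computing $\nabla(|u|^p)=p|u|^{p-1}\,\mathrm{sgn}(u)\,\nabla u$ and applying H\"older's inequality with the Sobolev embeddings $H^2\hookrightarrow L^{\frac{2N}{N-4}}$ and $H^2\hookrightarrow W^{1,\frac{2N}{N-2}}$ for $N\ge 5$, the exponent balance closes exactly at $p=\frac{N-2}{N-4}$, so the stated range $p<\frac{N-2}{N-4}$ (and $p<\infty$ for $N=3,4$, where $H^2\hookrightarrow L^\infty$, resp.\ $H^2\hookrightarrow L^q$ for all finite $q$) produces the required estimate $\||u|^p-|w|^p\|_{H^1}\lesssim(\|u\|_{H^2}^{p-1}+\|w\|_{H^2}^{p-1})\|u-w\|_{H^2}$. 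Once $U\in C([0,T];Y)$ is obtained, I would read off the full regularity by bootstrapping the equation: $\pa_t^2 u=\Delta u-V_0|x|^{-1}\pa_t u+|u|^p$ lies in $C([0,T];L^2)$ since $\Delta u\in C([0,T];L^2)$, $|x|^{-1}\pa_t u\in C([0,T];L^2)$ by Hardy, and $|u|^p\in C([0,T];L^2)$, giving $u\in C^2([0,T];L^2)\cap C^1([0,T];H^1)\cap C([0,T];H^2)=S_T$. Finally, the support statement follows from the standard finite-propagation-speed argument: integrating the energy identity over backward light cones $\{|x-x_0|\le t_0-t\}$, the flux through the mantle has a favorable sign, the damping contributes $-V_0\int|x|^{-1}|\pa_t u|^2\le 0$, and the nonlinear term is absorbed via Gronwall's inequality, forcing $u$ to vanish outside $\overline{B(0,R_0+t)}$. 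The main obstacle throughout is the singularity of $V_0|x|^{-1}$ at the origin, and the entire argument hinges on replacing pointwise control of this coefficient by Hardy's inequality at every step.
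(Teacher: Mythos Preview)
Your overall strategy matches the paper's closely: semigroup generation via a Hille--Yosida/Lumer--Phillips argument hinging on Hardy's inequality, followed by a Banach fixed point for the Duhamel formulation, with finite propagation handled by the standard energy method. Your resolvent step via Lax--Milgram plus the immediate bootstrap $\Delta u=\lambda^2 u+\lambda V_0|x|^{-1}u-F\in L^2\Rightarrow u\in H^2$ is in fact slightly more direct than the paper's appeal to the Coulomb--Schr\"odinger resolvent.

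There is, however, one step that fails as written: the Lipschitz estimate $\||u|^p-|w|^p\|_{H^1}\lesssim(\|u\|_{H^2}^{p-1}+\|w\|_{H^2}^{p-1})\|u-w\|_{H^2}$ is not valid for $1<p<2$. Expanding the gradient, one term in $\nabla(|u|^p-|w|^p)$ is $p\big(|u|^{p-2}u-|w|^{p-2}w\big)\nabla w$, and since $s\mapsto|s|^{p-2}s$ is only H\"older continuous of order $p-1$ when $p<2$, this term is controlled by $\|u-w\|^{p-1}$ rather than $\|u-w\|$; taking $w$ smooth with a simple zero and $u=w+h$ for a small constant $h$ near that zero gives an explicit counterexample to the Lipschitz bound (the $L^2$ norm of the difference of gradients behaves like $h^{p-1/2}$ for $p$ near $1$). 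The paper sidesteps this by the standard two-norm trick: the fixed-point ball is taken in $L^\infty(0,T;H^2\times H^1)$, but the contraction is run in the weaker metric of $C([0,T];H^1\times L^2)$, where one only needs $\||u|^p-|w|^p\|_{L^2}\lesssim(\ldots)\|u-w\|_{H^1}$, and this does hold for every $p>1$ via the pointwise bound $\big||u|^p-|w|^p\big|\le p(|u|+|w|)^{p-1}|u-w|$ together with H\"older and Sobolev. After obtaining the fixed point, the paper upgrades from mild to strong solution by checking that $t\mapsto(0,|u(t)|^p)$ is Lipschitz in the weak norm and invoking a standard regularity result for inhomogeneous linear evolution equations. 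With this modification your argument goes through.
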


\begin{definition}\label{def:lifespan}
We denote $\lifespan(u)$ as the maximal existence time for solution of \eqref{ndw}, that is, 
\[
\lifespan(u)=\sup\{T>0\;;\;u\in S_T\ \& \ \text{$u$ is a solution of \eqref{ndw} in $(0,T)$}\}
\]
\end{definition}

\begin{definition}\label{def:strauss.exp}
We introduce the following quadratic polynomial
\[
\gamma(n;p)=2+(n+1)p-(n-1)p^2
\]
and denote $p_0(n)$ as the positive root of 
the quadratic equation $\gamma(n;p)=0$ 
as in Introduction. We also put 
\[
V_*=\frac{(N-1)^2}{N+1}
\]
and for areas for $(p,V_0)$ as follows:
\begin{gather}
\Omega_0=
\left\{(p,V_0)
\;;\;
p=p_0(N+V_0), \quad 0\leq 
V_0< V_*
\right\}\\
\Omega_1=
\left\{(p,V_0)
\;;\;
\max\left\{p_0(N+2+V_0),\frac{2}{N-1-V_0}\right\}\leq p < p_0(N+V_0), 
\quad 
0\leq V_0< V_*
\right\}
\\
\Omega_2=
\left\{(p,V_0)
\;;\;
\frac{2(N+1)}{N+1+V_0}<p<\frac{2}{N-1-V_0}, 
\quad 
\frac{(N+1)(N-2)}{N+2}<V_0<V_*
\right\}
\\
\Omega_3=
\left\{(p,V_0)
\;;\;
\max\left\{\frac{N}{N-1},\frac{N+3+V_0}{N+1+V_0}\right\}
<p<
\max\left\{p_0(N+2+V_0),\frac{2(N+1)}{N+1+V_0}\right\}
\right\}
\end{gather}
\end{definition}

\bigskip

\input{blowup.fig}

\bigskip
\bigskip

Now we are in a position to state 
our main result in this paper about 
upper bound of lifespan of solutions to \eqref{ndw}.

\begin{theorem}\label{main}
Let $\frac{N}{N-1}<p<\infty $ if $N=3,4$ 
and $\frac{N}{N-1}<p<\frac{N-2}{N-4}$ if $N\geq 5$. 
Fix $(f,g)$ satisfying $f\geq 0$, $g\geq 0$, $g\not\equiv 0$ and 
${\rm supp}(f,g)\subset \overline{B(0,1)}$.
Let $u_\ep$ be the solution of \eqref{ndw} in Proposition \ref{prop:solve} with the parameter $\ep>0$. 
If $(p,V_0)\in \Omega_0\cup \Omega_1 \cup \Omega_2\cup \Omega_3$, then  
$\lifespan(u_\ep)<\infty$. 
Moreover, one has
\begin{gather}
\lifespan(u_\ep)
\leq 
\begin{cases}
\exp[C\ep^{-p(p-1)}]
& \text{if}\ (p,V_0)\in \Omega_0,
\\[10pt]
C_\delta' \ep^{-2p(p-1)/\gamma(N+V_0;p)-\delta} 
& \text{if}\ (p,V_0)\in \Omega_1,
\\[10pt]
C_\delta'' \ep^{-\frac{2(p-1)}{2N-(N-1+V)p}-\delta} 
& \text{if}\ (p,V_0)\in \Omega_2,
\\[10pt]
C_\delta''' \ep^{-1-\delta} 
& \text{if}\ (p,V_0)\in \Omega_3,
\end{cases}
\end{gather}
where $\delta$ can be chosen arbitrary small
and $C_\delta$, $C'_\delta$, $C''_\delta$ and $C'''_\delta$ 
are positive constants which depend on all parameters
without $\ep$.
\end{theorem}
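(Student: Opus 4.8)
\emph{Overall strategy.} The plan is to run a test-function (weak-formulation) argument in the spirit of Zhou--Han, adapted to the singular damping $V_0|x|^{-1}\pa_t$. Normalize $R_0=1$ and write $r=|x|$. By Proposition~\ref{prop:solve}, as long as $u_\ep$ exists it is supported in $\overline{B(0,1+t)}$, so every spatial integral below is over $B(0,1+t)$ and integrations by parts in $x$ produce no boundary term. I would argue by contradiction: assuming $\lifespan(u_\ep)=T<\infty$ is large, I test \eqref{ndw} against a carefully chosen positive solution $\psi$ of the associated linear adjoint problem, form the functional $F(t)=\int_{\R^N}u_\ep(x,t)\psi(x,t)\,dx$, and show that $F$ must blow up before a time that can be bounded explicitly in $\ep$. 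The four estimates of Theorem~\ref{main} then come from choosing $\psi$ (really one free parameter within a family of test functions) differently on each region $\Omega_0,\dots,\Omega_3$.

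\emph{Construction of the test function.} The operator $\pa_t^2-\Delta+V_0|x|^{-1}\pa_t$ is invariant under the parabolic scaling $(x,t)\mapsto(\lambda x,\lambda t)$, so the natural test functions are radial and self-similar, defined in the interior of the forward light cone $\{r<t\}$. Writing $\psi=t^{-a}\Psi(s)$ with the self-similar variable $s=r^2/t^2$ reduces the radial adjoint equation to a Gauss hypergeometric equation whose parameters depend on $N$, $V_0$ and $a$; the admissible test functions are therefore of the form $t^{-a}\,{}_2F_1(\alpha,\beta;\gamma;r^2/t^2)$, which is the source of the hypergeometric functions mentioned in the abstract. The decisive features I need to extract from the hypergeometric representation are (i) the positivity of $\psi$ on the relevant region, (ii) its asymptotics near the light cone $r\uparrow t$, and (iii) its behaviour near the singular point $r=0$. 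The singular damping enters the hypergeometric parameters in exactly the way that replaces the spatial dimension $N$ by the effective dimension $N+V_0$; this is what makes $p_0(N+V_0)$ and $\gamma(N+V_0;p)$ appear in the statement.

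\emph{Differential inequality and blow-up.} Testing \eqref{ndw} against $\psi$ and using that $\psi$ solves the formal adjoint equation $\pa_t^2\psi-\Delta\psi-V_0|x|^{-1}\pa_t\psi=0$, all linear contributions integrate away (the adjoint flips the sign of the damping, which is precisely what the chosen $\psi$ is designed to accommodate), leaving an identity of the form $\frac{d}{dt}[\cdots]=\int_{\R^N}|u_\ep|^p\psi\,dx\ge0$. The sign conditions $f,g\ge0$ and $g\not\equiv0$ give a strictly positive, growing lower bound for $F(t)$ that seeds the iteration. Hölder's inequality in the form
\[
\int_{\R^N}|u_\ep|\psi\,dx\le\Big(\int_{\R^N}|u_\ep|^p\psi\,dx\Big)^{1/p}W(t)^{1/p'},\qquad W(t)=\int_{B(0,1+t)}\psi\,dx,\ \ p'=\tfrac{p}{p-1},
\]
then yields $\int_{\R^N}|u_\ep|^p\psi\,dx\ge |F(t)|^p\,W(t)^{-(p-1)}$, which closes into a single differential/integral inequality for $F$. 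Inserting the growth rate of the weight $W(t)$, computed from the asymptotics (ii)--(iii), and solving the inequality produces finite-time blow-up together with a quantitative upper bound on $T$.

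\emph{The four regimes and the main obstacle.} In the subcritical Strauss region $\Omega_1$ the balance between the $F^p$ nonlinearity and the polynomially growing weight gives directly the Sideris-type exponent $2p(p-1)/\gamma(N+V_0;p)$; at the critical value $p=p_0(N+V_0)$ (region $\Omega_0$) the weight integral is logarithmically borderline and a slicing/iteration as in Zhou--Han upgrades the polynomial bound to the exponential $\exp[C\ep^{-p(p-1)}]$. For small $p$ (regions $\Omega_2,\Omega_3$) the self-similar test function fails to remain admissible once $p$ or $V_0$ is too large (integrability at $r=0$, or positivity of the hypergeometric profile, breaks down), so I would instead use a power-type weight adapted to the singular damping; the exponents $2(p-1)/(2N-(N-1+V_0)p)$ and $1$ then emerge from the corresponding cruder balances, and the region boundaries are exactly the thresholds where admissibility at the origin---or dominance of the light-cone versus the origin contribution to $W(t)$---switches. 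I expect the main difficulty to lie in the test-function construction: establishing the positivity and the precise two-sided asymptotics of the hypergeometric profile, and rigorously justifying the integrations by parts against the strongly singular coefficient $|x|^{-1}$ near $r=0$ (using the $H^2$-regularity of Proposition~\ref{prop:solve} together with a truncation near the origin). This is also where the threshold $V_*=\frac{(N-1)^2}{N+1}$ originates, as the value beyond which the required positive, admissible test function ceases to exist.
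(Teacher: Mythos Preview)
Your overall strategy---test the equation against a positive solution of the adjoint problem, apply H\"older, and close a differential inequality---is the same as the paper's. But two structural points are missing, and they are precisely where the exponents and the regions $\Omega_j$ come from.

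First, the testing identity does \emph{not} collapse to a single functional $F(t)=\int u\psi$. After one integration in time you get
\[
\frac{d}{dt}\!\left[\int \pa_t u\,\psi-\int u\,\pa_t\psi+\int \frac{V_0}{|x|}u\,\psi\right]=\int |u|^p\psi,
\]
and after a second integration the bracket becomes $\int u\,\psi$ plus \emph{two additional terms}: a time-integral of $\int u\,\pa_t\psi$ and a time-integral of $\int |x|^{-1}u\,\psi$. Each must be estimated separately by H\"older, with a different weight; your ``single $W(t)=\int\psi$'' picture only covers the first. The decisive term is the one carrying $\pa_t\psi$: in the paper's family $\psi=\Phi_\beta$ one has $\pa_t\Phi_\beta=-\beta\Phi_{\beta+1}$, and $\Phi_{\beta+1}$ \emph{diverges} on the light cone like $(1-|x|/(t+2))^{\frac{N-1-V_0}{2}-\beta-1}$. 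Controlling $\|\Phi_{\beta+1}\|_{L^{p'}(B(0,1+t))}$ is exactly what forces the choice of the free parameter $\beta$ (equivalently an auxiliary exponent $q>p$), and the different admissible choices of $\beta$ on $\Omega_1,\Omega_2,\Omega_3$ are what produce the three polynomial lifespan exponents; in the critical case $\Omega_0$ the borderline integrability yields a logarithm and hence the exponential bound. Your explanation that in $\Omega_2,\Omega_3$ one must switch to ``a power-type weight'' is not what happens: the \emph{same} hypergeometric family is used throughout, only the parameter changes.

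Second, the self-similar ansatz $\psi=t^{-a}\Psi(r^2/t^2)$ is the heat-equation reduction; for the wave operator the correct variable is $z=\dfrac{2|x|}{|x|+t}$ (equivalently $r/t$), and with $\psi=(|x|+t)^{-\beta}\varphi(z)$ the radial equation becomes the Gauss hypergeometric equation with parameters $\bigl(\beta,\tfrac{N-1+V_0}{2},N-1\bigr)$. This concrete form is what gives the clean recursion $\pa_t\Phi_\beta=-\beta\Phi_{\beta+1}$ and the two-sided bounds (via the Euler integral representation) that you need to run the estimates above. The threshold $V_*$ is not where positivity of the profile fails; it is where the window $p_0(N+V_0)>\tfrac{2}{N-1-V_0}$ needed to pick an admissible $\beta\in(0,\tfrac{N-1-V_0}{2})$ closes.
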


\begin{remark}
We emphasize the following two facts. 
The proof of \cite{ZH14} 
depends on an estimate established by \cite{YZ06} 
(for detail, see \cite[(2,5')]{YZ06}), 
however, our proof does not depend on that. 
The proof of Theorem \ref{main} can be 
applicable to weaker solutions of \eqref{ndw} 
belonging to 
$C([0,T));H^1(\R^N)\cap L^p((0,T)\times \R^N)$. 
\end{remark}

\begin{remark}
Taking the threshold value 
$V_0=V_*$ 
formally, we have
\[
\gamma\left(N+V_*;p\right)
=
2\left(1+Np\right)
\left(1-\frac{N-1}{N+1}p\right)
\]
and therefore $p_0(N+V_*)=\frac{N+1}{N-1}=1+\frac{2}{N-1}$. 
On the one hand, 
critical exponent for the blowup phenomena for the problem 
\begin{equation}\label{ndw.alpha}
\begin{cases}
\pa_t^2 u(x,t) -\Delta u(x,t) + \lr{x}^{-\alpha}\pa_t u(x,t)=|u(x,t)|^p, & (x,t)\in \R^N \times (0,T),
\\
u(x,0)=\ep f(x), & x\in \Omega,
\\ 
\pa_t u(x,0)=\ep g(x),  & x\in \Omega, 
\end{cases}
\end{equation}
is given by $p_F(\alpha)=1+\frac{2}{N-\alpha}$, $\alpha\in [0,1)$ 
which is so-called Fujita exponent 
(see e.g., 
Ikehata--Todorova--Yordanov \cite{ITY09}
and also Ikeda--Ogawa \cite{IO16}). 
We formally put again a threshold value $\alpha=1$. 
Then one can find 
\[
p_0\left(N+V_0\right)=p_F(1). 
\]
The left-hand side comes from the blowup phenomena 
for nonlinear wave equation and the right-hand side comes from 
the one for nonlinear heat equation. 
In this connection, we would conjecture that 
if $V_0>V_*$, then the threshold of 
blowup phenomena is given by the Fujita exponent $p_F(1)$. 
\end{remark}

\begin{remark}
If $(p,V_0)\in \Omega_0 \cup \Omega_1$, 
then Theorem \ref{main} seems to give a sharp lifespan 
of solutions to \eqref{ndw} with small initial data. 
In the case $(p,V_0)\in \Omega_3$, we cannot derive 
the estimates for lifespan with $\ep^{-\tau}$ with $\tau$ less than one. 
So the estimate in $\Omega_3$ seems not to be sharp. 
For the case $(p,V_0)\in \Omega_2$ 
the effect of diffusion structure seems to appear in the estimate. 
\end{remark}

The present paper is organized as follows. 
In Section 2, we first give existence and uniqueness 
of local-in-time solutions to \eqref{ndw} if $p\leq \frac{N-2}{N-4}$
by using the standard semigroup properties.
In Section 3, we construct special solutions 
of linear wave equation with anti-damping term $-\frac{V_0}{|x|}\pa_tu$.
In this point we use the idea due to \cite{ZH14} 
(they only considered the case $V_0=0$), 
which will be a test function for proving blowup phenomena. 
In Section 4, we prove blowup phenomena by dividing 
two cases $p<p_0(N+V_0)$ and $p=p_0(N+V_0)$.

\section{Local solvability of nonlinear wave equation with singular damping}

In this section we construct a solution of \eqref{ndw} 
with initial data belonging to $H^2(\R^N)\times H^1(\R^N)$. 
To do so, we first treat the linear problem
\begin{equation}\label{dw0}
\begin{cases}
\pa_t^2 u(x,t) -\Delta u(x,t) + a(x)\pa_t u(x,t)=0, 
& (x,t)\in \R^N \times (0,\infty),
\\
u(x,0)=u_0(x), \quad \pa_tu(x,0)=u_1(x),& x\in \R^N. 
\end{cases}
\end{equation}

\subsection{$C_0$-Semigroup for linear wave equation with singular damping}
Now we start with the usual $N$-dimensional Laplacian
\[
Au:=-\Delta u, \quad D(A)=H^2(\R^N). 
\]
We note that $A$ is $m$-accretive in $L^2(\R^N)$, that is, 
$(I+A)D(A)=L^2(\R^N)$ and 
$\left(-\Delta u,u\right)\geq 0$ for every $u\in H^2(\R^N)$.
Set $\mathcal{H}=H^1(\R^N)\times L^2(\R^N)$ and 
\[
\mathcal{A}
=
\begin{pmatrix}
0 & -1 \\
A & 0 
\end{pmatrix}, 
\quad 
D(\mathcal{A})
=
H^2(\R^N)\times H^1(\R^N)
\]
and 
\[
\mathcal{B}
=
\begin{pmatrix}
0 & 0 \\
0 & |x|^{-1} 
\end{pmatrix}, 
\quad 
D(\mathcal{B})
=
H^1(\R^N)\times 
\{v\in L^2(\R^N)\;;\;|x|^{-1}v\in L^2(\R^N)\}
\]
and put 
\[
\mathcal{A}_\kappa=\mathcal{A}+\kappa\mathcal{B}, \quad 
D(\mathcal{A}_\kappa)=D(\mathcal{A})\cap D(\mathcal{B})
\]
Then in view of Hille--Yosida theorem, 
we have the following $C_0$-semigroup on $\mathcal{H}$ (see e.g., Pazy 
\cite{Pazybook}).
\begin{lemma}\label{lem:linear}
Let $N\geq 3$. For every $V\geq 0$, $\frac{1}{2}I+\mathcal{A}_\kappa$ is $m$-accretive 
in $\mathcal{H}$.
Therefore $-\mathcal{A}_{\kappa}$ generates a $C_0$-semigroup 
$\{T_{\kappa}(t)\}_{t\geq0}$ on $\mathcal{H}$. 
Moreover, if $\supp (u_0,u_1)\subset \overline{B(0,R)}$, then 
$\supp [T_{\kappa}(t)(u_0,u_1)]\subset \overline{B(0,R+t)}$.
\end{lemma}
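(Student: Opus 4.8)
The plan is to verify the hypotheses of the Lumer--Phillips (equivalently Hille--Yosida) theorem for $-\mathcal{A}_\kappa$ by showing that $\frac12 I+\mathcal{A}_\kappa$ is $m$-accretive on $\mathcal{H}$, and then to deduce the support property from a local energy estimate on backward light cones. Throughout I equip $\mathcal{H}=H^1(\R^N)\times L^2(\R^N)$ with the (equivalent) inner product
\[
\langle (u,v),(\tilde u,\tilde v)\rangle_{\mathcal H}
=\int_{\R^N}\bigl(\nabla u\cdot\overline{\nabla\tilde u}+u\overline{\tilde u}+v\overline{\tilde v}\bigr)\,dx .
\]
The operator is densely defined, since $C_c^\infty(\R^N\setminus\{0\})\times C_c^\infty(\R^N\setminus\{0\})\subset D(\mathcal{A}_\kappa)$ is dense in $\mathcal H$ for $N\geq 3$. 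Two things must then be checked: accretivity of $\frac12 I+\mathcal{A}_\kappa$, and the range condition that $\frac32 I+\mathcal{A}_\kappa=I+(\tfrac12 I+\mathcal{A}_\kappa)$ maps $D(\mathcal{A}_\kappa)$ onto $\mathcal{H}$.

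For accretivity, take $U=(u,v)\in D(\mathcal{A}_\kappa)$, so $\mathcal{A}_\kappa U=(-v,-\Delta u+\kappa|x|^{-1}v)$. Integrating by parts in $-\int\Delta u\,\overline v$ (boundary terms vanish since $u\in H^2$, $v\in H^1$), the gradient contributions cancel up to a purely imaginary quantity, leaving $\mathrm{Re}\,\langle\mathcal{A}U,U\rangle=-\mathrm{Re}\int u\overline v$. Completing the square then gives
\[
\mathrm{Re}\,\langle(\tfrac12 I+\mathcal{A})U,U\rangle
=\tfrac12\int_{\R^N}|\nabla u|^2\,dx+\tfrac12\int_{\R^N}|u-v|^2\,dx\ \geq 0,
\]
while the damping contributes $\mathrm{Re}\,\langle\kappa\mathcal{B}U,U\rangle=\kappa\int_{\R^N}|x|^{-1}|v|^2\,dx\geq0$. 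Here the nonnegativity $\kappa\geq0$ of the damping (not its size) is exactly what makes the singular term harmless, and the shift $\frac12$ is dictated by the square above.

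For the range condition I would solve $(\frac32 I+\mathcal{A}_\kappa)(u,v)=(f_1,f_2)$ for given $(f_1,f_2)\in\mathcal H$. Eliminating $v=\frac32 u-f_1$ reduces the system to the scalar elliptic equation
\[
-\Delta u+\tfrac94 u+\tfrac32\kappa|x|^{-1}u=f_2+\tfrac32 f_1+\kappa|x|^{-1}f_1=:g,
\]
and Hardy's inequality ($N\geq3$) guarantees $|x|^{-1}f_1\in L^2$, so $g\in L^2$. I would solve this by Lax--Milgram for the form $a(u,w)=\int(\nabla u\cdot\overline{\nabla w}+\frac94 u\overline w+\frac32\kappa|x|^{-1}u\overline w)\,dx$ on $H^1$: Hardy's inequality again shows the singular term is well defined and bounded on $H^1\times H^1$, while coercivity is immediate because every term is nonnegative and $\|\nabla u\|_{L^2}^2+\frac94\|u\|_{L^2}^2$ already dominates $\|u\|_{H^1}^2$. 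Elliptic regularity upgrades the weak solution to $u\in H^2$ (since $-\Delta u=g-\frac94 u-\frac32\kappa|x|^{-1}u\in L^2$, again by Hardy), and then $v=\frac32 u-f_1\in H^1$ with $|x|^{-1}v\in L^2$, so $(u,v)\in D(\mathcal{A}_\kappa)$. This gives $m$-accretivity; Lumer--Phillips yields the contraction semigroup generated by $-(\frac12 I+\mathcal{A}_\kappa)$, hence $-\mathcal{A}_\kappa$ generates $\{T_\kappa(t)\}$ with $\|T_\kappa(t)\|_{\mathcal H}\leq e^{t/2}$.

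Finally, for finite propagation speed I would fix $x_0$ with $|x_0|>R$ and $t_0<|x_0|-R$, and track the local energy $E(t)=\tfrac12\int_{|x-x_0|\leq t_0-t}(|\pa_t u|^2+|\nabla u|^2)\,dx$ on the backward cone with apex $(x_0,t_0)$. Differentiating and inserting \eqref{dw0}, the damping produces $-\int a(x)|\pa_t u|^2\leq0$ and the shrinking-cone boundary flux is nonpositive by Cauchy--Schwarz, so $E'(t)\leq0$; since the cone base $\{|x-x_0|\le t_0\}$ is disjoint from $\overline{B(0,R)}$, one has $E(0)=0$, whence $u\equiv0$ on the cone and in particular $u(x_0,t_0)=0$. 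Because $|x|\geq|x_0|-t_0>R\geq0$ throughout the cone, it stays away from the origin and the singular coefficient $a(x)=\kappa|x|^{-1}$ is bounded there, so no regularization near $x=0$ is needed. The main obstacle is that this computation requires a genuine strong solution: I would run it first for data in $D(\mathcal{A}_\kappa)$ (or $D(\mathcal{A}_\kappa^2)$), where $t\mapsto T_\kappa(t)U$ is differentiable and solves \eqref{dw0} strongly, and then pass to general $\mathcal{H}$-data by density, the support inclusion being stable under convergence in $\mathcal H$. Thus the only real technical points are the Hardy-type bounds taming the singular potential in the range condition and this approximation step for the cone estimate.
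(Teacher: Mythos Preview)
Your proof is correct and follows essentially the same route as the paper: verify accretivity of $\tfrac12 I+\mathcal A_\kappa$ via the identity $\mathrm{Re}\,\langle\mathcal A U,U\rangle_{\mathcal H}=-\mathrm{Re}\int u\bar v$, establish the range condition by eliminating $v$ and solving the resulting scalar elliptic equation with Coulomb potential, and obtain finite propagation by the standard backward-cone energy argument. The only (minor) difference is that for the range condition the paper rescales to reduce to the known resolvent of the Schr\"odinger operator $-\Delta+\kappa|y|^{-1}$, whereas you solve the same scalar equation directly by Lax--Milgram plus Hardy-based elliptic regularity; your approach is slightly more self-contained, while the paper's appeal to Coulomb--Schr\"odinger theory is more succinct.
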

\begin{proof}
By Hardy's inequality we have $D(\mathcal{A})\subset D(\mathcal{B})$. This means that 
$D(\mathcal{A}_\kappa)=D(\mathcal{A})\cap D(\mathcal{B})=D(\mathcal{A}).$

\smallskip

\noindent{\bf (Accretivity)} By interation by parts, we have
\[
\left(
\mathcal{A}
\begin{pmatrix}
u\\v
\end{pmatrix},\begin{pmatrix}
u\\v
\end{pmatrix}
\right)_{\mathcal{H}}
=
\left(
\begin{pmatrix}
-v\\-\Delta u
\end{pmatrix},\begin{pmatrix}
u\\v
\end{pmatrix}
\right)_{\mathcal{H}}
=
\int_{\R^N}
\Big(
  -vu-\nabla v\cdot\nabla u -(\Delta u)v
\Big)\,dx
=
-\int_{\R^N}
  uv
  \,dx.
\]
Since $\kappa\mathcal{B}$ is clearly accretive, we have the accretivity of $\mathcal{A}_\kappa$. 

\smallskip

\noindent{\bf (Maximality)}
Let $F=(f,g)\in \mathcal{H}$. Then $\lambda U+\mathcal{A}U+\kappa\mathcal{B}U=F$ 
is equivalent to the system
\[
\lambda u -v=f, \quad 
\lambda v -\Delta u +\kappa|x|^{-1}v=g.
\]
Substituting $v=\lambda u - f$, we see that
\[
\lambda^2 u -\Delta u +\lambda \kappa |x|^{-1} u =g+\lambda f+k|x|^{-1}f=f_{\lambda,\kappa}.
\]
Taking $\widetilde{u}(y)=u(\lambda^{-1} y)$ and $\widetilde{f_{\lambda,\kappa}}(y)=f_{\lambda,\kappa}(\lambda^{-1} y)$ yields that
\[
\widetilde{u} -\Delta \widetilde{u} +k|y|^{-1} \widetilde{u} 
=
\lambda^{-2}
\widetilde{f_{\lambda,\kappa}}. 
\]
This is nothing but the resolvent problem of the Schr\"odinger operator with 
positive Coulomb potentials. Therefore there exists 
$\widetilde{u}_{\lambda,k}\in H^2(\R^N)$ such that 
\[
\widetilde{u}_{\lambda,k} -\Delta \widetilde{u}_{\lambda,k} +k|y|^{-1} \widetilde{u}_{\lambda,k}
=
\lambda^{-2}
\widetilde{f_{\lambda,k}}. 
\]
Putting $u_{\lambda,k}(x)=\widetilde{u}_{\lambda,k}(\lambda^{2} x)\in H^2(\R^N)$, we obtain 
\[
\lambda^2 u_{\lambda,k} -\Delta u_{\lambda,k} +\lambda k|x|^{-1} u_{\lambda,k} =f_{\lambda,k}=g+\lambda f+k|x|^{-1}f.
\]
Finally setting 
$v_{\lambda,k}=\lambda u_{\lambda,k} -f\in H^1(\R^N)$, we obtain 
($\lambda I+\mathcal{A}+k\mathcal{B})(u_{\lambda,k},v_{\lambda,k})=(f,g)$. 

The finite propagation property follows from the standard argument 
for wave equation with regular damping term. The proof is complete.
\end{proof}

\subsection{Local solvability of nonlinear problem}

We consider \eqref{ndw} with initial data $(u(0),\pa_tu(0))=U_0=(u_0,u_1)\in H^2(\R^N)\times H^1(\R^N)$ 
which is equivalent to the following problem
\[
\pa_t
\begin{pmatrix}
u(t)\\v(t)
\end{pmatrix}
+\mathcal{A}_\kappa
\begin{pmatrix}
u(t)\\v(t)
\end{pmatrix}
=
\begin{pmatrix}
0\\
\mathcal{N}(u(t),v(t))
\end{pmatrix}
\]
with $\mathcal{N}(u,v)=(0,|u|^{p})$. 
Here we construct the 
corresponding  mild solution in $H^2(\R^N)\times H^1(\R^N)$ given by 
\[
\begin{pmatrix}
u(t)\\v(t)
\end{pmatrix}
=
T_\kappa(t)
\begin{pmatrix}
u_0\\u_1
\end{pmatrix}
+
\int_{0}^t
T_\kappa(t-s)
\begin{pmatrix}
0\\
\mathcal{N}(u(s),v(s))
\end{pmatrix}
\,ds,
\]
where $\{T_\kappa(t)\}_{t\geq 0}$ is determined in Lemma \ref{lem:linear}.

\begin{lemma}\label{lem:XT}
{\bf (i)} 
The following metric space
\[
X_T=\Big\{
U\in 
C([0,T];\mathcal{H})
\cap L^\infty([0,T];D(\mathcal{A}_\kappa))
\;;\;
\sup_{0<t<T}\|U(t)\|_{D(\mathcal{A}_\kappa)}\leq M
\Big\}, 
\quad 
M:=2(\|U_0\|_{D(\mathcal{A}_\kappa)}+1)
\]
with the distance
\[
d(U_1,U_2):=
\max_{0\leq t\leq T}
\big\|U(t)-U_2(t)\big\|_{\mathcal{H}}, \quad U_1,U_2\in X_T.
\]
is complete. 

\medskip 

\noindent
{\bf (ii)} 
If $\supp (u_0,u_1)\subset \overline{B(0,R)}$, 
then the following metric space
\[
Y_T=\Big\{
U\in 
C([0,T];\mathcal{H})
\cap L^\infty([0,T];D(\mathcal{A}_\kappa))
\;;\;
\sup_{0<t<T}\|U(t)\|_{D(\mathcal{A}_\kappa)}\leq M, 
\quad 
\supp (u(t),v(t))\subset \overline{B(0,R+t)}
\Big\}
\]
with the same distance $d$ is also complete. 
\end{lemma}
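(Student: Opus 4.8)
The plan is to exhibit both $X_T$ and $Y_T$ as subsets of the Banach space $C([0,T];\mathcal{H})$ with the uniform norm (which is complete because $\mathcal{H}$ is), and to show they are closed there. The subtlety is that the metric $d$ measures distance only in the weaker norm of $\mathcal{H}$, while membership in $X_T$ requires the stronger uniform bound in $D(\mathcal{A}_\kappa)$ and membership in $Y_T$ additionally requires the support constraint; both must be shown to survive a limit taken merely in $\mathcal{H}$.

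For (i), I would start from a Cauchy sequence $\{U_n\}$ in $(X_T,d)$. Since $d$ is exactly the uniform $\mathcal{H}$-distance and $C([0,T];\mathcal{H})$ is complete, there is some $U\in C([0,T];\mathcal{H})$ with $U_n\to U$ uniformly on $[0,T]$ in $\mathcal{H}$. What remains is to check $U\in X_T$, namely that $U(t)\in D(\mathcal{A}_\kappa)$ for each $t$ with the bound $\|U(t)\|_{D(\mathcal{A}_\kappa)}\leq M$ preserved, and that $U\in L^\infty([0,T];D(\mathcal{A}_\kappa))$.

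The decisive step is a pointwise weak-compactness argument. Fix $t\in[0,T]$, and equip $D(\mathcal{A}_\kappa)$ with its graph inner product, under which it is a separable Hilbert space continuously embedded in $\mathcal{H}$. The sequence $\{U_n(t)\}$ is bounded by $M$ in $D(\mathcal{A}_\kappa)$, so a subsequence satisfies $U_{n_k}(t)\rightharpoonup W$ weakly in $D(\mathcal{A}_\kappa)$; continuity of the embedding makes this a weak limit in $\mathcal{H}$ as well, and comparison with the strong convergence $U_n(t)\to U(t)$ in $\mathcal{H}$ forces $W=U(t)$. Hence $U(t)\in D(\mathcal{A}_\kappa)$, and weak lower semicontinuity of the graph norm yields $\|U(t)\|_{D(\mathcal{A}_\kappa)}\leq\liminf_k\|U_{n_k}(t)\|_{D(\mathcal{A}_\kappa)}\leq M$. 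Applying the same subsequence reasoning along any $t_j\to t$ shows $U$ is weakly continuous from $[0,T]$ into $D(\mathcal{A}_\kappa)$; by the Pettis theorem it is then strongly measurable into the separable space $D(\mathcal{A}_\kappa)$, and being bounded by $M$ it lies in $L^\infty([0,T];D(\mathcal{A}_\kappa))$. This gives $U\in X_T$, proving (i).

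For (ii) the only addition is that the support condition passes to the limit, which is immediate: for each $t$ one has $\supp(u_n(t),v_n(t))\subset\overline{B(0,R+t)}$, and since $U_n(t)\to U(t)$ in $\mathcal{H}\hookrightarrow L^2(\R^N)\times L^2(\R^N)$, testing against functions supported outside the ball shows $\supp(u(t),v(t))\subset\overline{B(0,R+t)}$; thus $U\in Y_T$. I expect the main obstacle to be exactly this topology mismatch in (i): because $d$ controls only the $\mathcal{H}$-norm, one cannot invoke closedness of a norm-ball in the same space, and the weak-compactness together with lower-semicontinuity device is what bridges the gap. Everything else—completeness of $C([0,T];\mathcal{H})$, continuity of the embedding $D(\mathcal{A}_\kappa)\hookrightarrow\mathcal{H}$, and closedness of the support condition under $L^2$-convergence—is routine.
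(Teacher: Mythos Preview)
Your argument is correct and rests on the same core idea as the paper's: the limit in $C([0,T];\mathcal{H})$ exists by completeness, and the $D(\mathcal{A}_\kappa)$-bound is recovered via weak compactness. The implementation differs slightly. The paper works globally: it invokes Banach--Alaoglu in $L^\infty([0,T];D(\mathcal{A}_\kappa))$ to extract a subsequence converging weak-$*$ to some $\widetilde{U}$ with $\sup_t\|\widetilde{U}(t)\|_{D(\mathcal{A}_\kappa)}\leq M$, and then identifies $\widetilde{U}$ with the strong limit $U_\infty$ in $C([0,T];\mathcal{H})$. You instead argue pointwise in $t$, using weak sequential compactness in the Hilbert space $D(\mathcal{A}_\kappa)$ together with lower semicontinuity of the norm, and then separately verify strong measurability via weak continuity and Pettis' theorem. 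Your route is more explicit about measurability and avoids identifying the predual of $L^\infty([0,T];D(\mathcal{A}_\kappa))$; the paper's route is shorter but leaves the identification $U_\infty=\widetilde{U}$ and the pointwise interpretation of the weak-$*$ limit more implicit. For part (ii) both arguments are identical: the support condition is closed under $L^2$-convergence.
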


\begin{proof}
Take a Cauchy sequence $\{U_n\}_{n\in\N}$ in $X_T$. 
The completeness of $C([0,T];\mathcal{H})$ yields that there exists 
$U_\infty\in C([0,T];\mathcal{H})$ such that 
\[
U_n\to U_\infty\quad \text{strongly in }C([0,T];\mathcal{H})\quad \text{ as }n\to \infty.
\]
Moreover, we can subtract a subsequence $U_{n_j}$ and $\widetilde{U}\in L^\infty([0,T];D(\mathcal{A}_\kappa))$
such that 
\[
\sup_{0<t<T}\|\widetilde{U}(t)\|_{D(\mathcal{A}_\kappa)}\leq M\]
 and 
\[
U_{n_j}\to \widetilde{U}\quad \text{$*$-weakly in } L^\infty([0,T];D(\mathcal{A}_\kappa))\quad \text{ as }j\to \infty.
\]
Therefore we have $U_\infty=\widetilde{U}$. Therefore $X_T$ is a complete metric space.
If $\supp U_n(t)\subset \{x;|x|\leq R+t\}$, then 
by strong convergence we have $\supp U_\infty(t)\subset \{x;|x|\leq R+t\}$. 
This means that $Y_T$ is also complete. 
\end{proof}

\begin{lemma}\label{lem:mild}
There exists $T_0$ such that 
$\Psi:X_{T_0}\to X_{T_0}$ and $\Psi:Y_{T_0}\to Y_{T_0}$ 
are both well-defined, and $\Psi$ 
is contractive in $X_{T_0}$ and in $Y_{T_0}$. 
\end{lemma}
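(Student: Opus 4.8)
I propose to prove Lemma \ref{lem:mild} by the usual contraction-mapping argument for the Duhamel map
\[
\Psi(U)(t)=T_\kappa(t)U_0+\int_0^t T_\kappa(t-s)\begin{pmatrix}0\\ |u(s)|^p\end{pmatrix}\,ds,
\]
whose only nonstandard input is a pair of nonlinear estimates whose validity is what pins down the admissible range of $p$. First I would record the two semigroup bounds coming from Lemma \ref{lem:linear}: since $\frac12 I+\mathcal{A}_\kappa$ is $m$-accretive, $\|T_\kappa(t)\|_{\mathcal{L}(\mathcal{H})}\le e^{t/2}$, and because $T_\kappa(t)$ commutes with $\mathcal{A}_\kappa$ on $D(\mathcal{A}_\kappa)$ the same bound holds in the graph norm, which on $D(\mathcal{A}_\kappa)=H^2(\R^N)\times H^1(\R^N)$ is equivalent (via elliptic regularity and Hardy's inequality) to the $H^2\times H^1$ norm.

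The heart of the matter is the nonlinear estimate. I would prove that for $u\in H^2(\R^N)$ one has $\||u|^p\|_{H^1}\le C\|u\|_{H^2}^p$, together with the Lipschitz version $\||u_1|^p-|u_2|^p\|_{L^2}\le C\big(\|u_1\|_{H^2}^{p-1}+\|u_2\|_{H^2}^{p-1}\big)\|u_1-u_2\|_{H^1}$. Both follow from Sobolev embeddings together with the elementary inequality $\big||a|^p-|b|^p\big|\le p(|a|^{p-1}+|b|^{p-1})|a-b|$, and, for the gradient, the chain rule $\nabla|u|^p=p|u|^{p-2}u\,\nabla u$, which is legitimate since $p>1$. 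Concretely, bounding $\|\nabla(|u|^p)\|_{L^2}\le p\||u|^{p-1}\nabla u\|_{L^2}$ and applying H\"older with exponents $N$ and $\frac{2N}{N-2}$ reduces matters to controlling $\|u\|_{L^{N(p-1)}}^{p-1}\|\nabla u\|_{L^{2N/(N-2)}}$; the second factor is $\le C\|u\|_{H^2}$ by $H^1\hookrightarrow L^{2N/(N-2)}$, and the first is finite precisely when $N(p-1)\le\frac{2N}{N-4}$, i.e. $p\le\frac{N-2}{N-4}$, which is exactly the hypothesis for $N\ge5$ (the cases $N=3,4$ being handled by $H^2\hookrightarrow L^\infty$ and $H^2\hookrightarrow L^q$ for all $q<\infty$, respectively). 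This is the step where the dimensional restriction on $p$ is genuinely used, and I expect it to be the main obstacle, since the whole construction is only as good as this embedding.

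With these estimates the rest is routine. For the self-mapping property I would estimate, using the graph-norm semigroup bound and $\|(0,|u|^p)\|_{D(\mathcal{A}_\kappa)}\le C\||u|^p\|_{H^1}\le CM^p$,
\[
\sup_{0<t<T}\|\Psi(U)(t)\|_{D(\mathcal{A}_\kappa)}\le e^{T/2}\|U_0\|_{D(\mathcal{A}_\kappa)}+CTe^{T/2}M^p.
\]
Since $M=2(\|U_0\|_{D(\mathcal{A}_\kappa)}+1)$ strictly exceeds $\|U_0\|_{D(\mathcal{A}_\kappa)}+1$, choosing $T_0$ so small that $e^{T_0/2}\|U_0\|_{D(\mathcal{A}_\kappa)}<M-1$ and $CT_0e^{T_0/2}M^p<1$ forces $\Psi(U)\in X_{T_0}$; continuity in $\mathcal{H}$ and membership in $L^\infty([0,T_0];D(\mathcal{A}_\kappa))$ follow from strong continuity of $T_\kappa$, the Lipschitz estimate (which makes $s\mapsto|u(s)|^p$ continuous into $L^2$), and closedness of $\mathcal{A}_\kappa$ together with the Bochner integral. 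For $Y_{T_0}$ one additionally notes that $|u(s)|^p$ is supported in $\overline{B(0,R+s)}$ and that $T_\kappa$ has the finite-propagation property of Lemma \ref{lem:linear}, so $\supp\Psi(U)(t)\subset\overline{B(0,R+t)}$.

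Finally, for the contraction I would combine the $\mathcal{H}$-bound of $T_\kappa$ with the Lipschitz estimate to obtain
\[
d(\Psi(U_1),\Psi(U_2))\le CTe^{T/2}M^{p-1}\,d(U_1,U_2),
\]
and shrink $T_0$ once more so that $CT_0e^{T_0/2}M^{p-1}<1$. Taking $T_0$ to be the minimum of the two thresholds yields a map that is simultaneously a self-map and a strict contraction on both $X_{T_0}$ and $Y_{T_0}$, completing the proof.
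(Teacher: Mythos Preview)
Your proposal is correct and follows essentially the same route as the paper: the same H\"older split with exponents $N$ and $\tfrac{2N}{N-2}$ to control $\||u|^p\|_{H^1}$, the same use of the embedding $H^2\hookrightarrow L^{N(p-1)}$ that forces $p\le\tfrac{N-2}{N-4}$, the same $e^{t/2}$ semigroup bound in both $\mathcal{H}$ and the graph norm, and the same $L^2$-level Lipschitz estimate for the contraction. Your treatment is in places slightly cleaner than the paper's (you keep the correct power $M^p$ in the self-map bound and explicitly dispose of $N=3,4$), but there is no substantive difference in strategy.
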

\begin{proof}
First observe that by finite propagation property in Lemma \ref{lem:linear}, 
we can deduce $\supp \Psi U(t)\subset \overline{B(0,R+t)}$ 
when $\supp U_0 \subset \overline{B(0,R)}$.  
Since $X_{T}$ and $Y_T$ are endowed with the same distance, 
It suffices to prove the assertion for $X_T$. 

We recall that the norms in $D(\mathcal{A}_\kappa)$ and 
in $H^2(\R^N)\cap H^1(\R^N)$ are equivalent. 

\noindent
{\bf (well-defined)} If $U=(u,v)\in X_T$, then
\begin{align*}
\|\mathcal{N}(U(t))\|_{D(\mathcal{A}_\kappa)}^2
&\leq 
C_\kappa^2\|\mathcal{N}(U(t))\|_{H^2\times H^1}^2
\\
&=
C_\kappa^2\big\||u(t)|^p\big\|_{H^1}^2
\\
&=
C_\kappa^2
\int_{\R^N}
\Big(|u(t)|^p+|\nabla(|u(t)|^p)|^2\Big)\,dx
\\
&=
C_\kappa^2
\left(\int_{\R^N}
|u(t)|^{2p}
\,dx
+p^2
\int_{\R^N}|u(t)|^{2(p-1)}|\nabla u(t)|^2\,dx
\right)
\\
&\leq
C_\kappa^2
\left(\int_{\R^N}|u(t)|^{N(p-1)}\,dx\right)^{\frac{2}{N}}
\left(\int_{\R^N}|u(t)|^{\frac{2N}{N-2}}\,dx\right)^{1-\frac{2}{N}}
\\
&\quad +p^2C_\kappa^2
\left(\int_{\R^N}|u(t)|^{N(p-1)}\,dx\right)^{\frac{2}{N}}
\left(\int_{\R^N}|\nabla u(t)|^{\frac{2N}{N-2}}\,dx\right)^{1-\frac{2}{N}}.
\end{align*}
Since $p\leq \frac{N-2}{N-4}$, we have $N(p-1)\leq (\frac{1}{2}-\frac{2}{N})$ and therefore
\begin{align*}
\|\mathcal{N}(U(t))\|_{D(\mathcal{A}_\kappa)}
&\leq C_\kappa'\|u(t)\|_{H^2}
\\
&\leq C_\kappa''\|U(t)\|_{H^2\times H^1}
\\
&\leq C_\kappa''\|U(t)\|_{D(\mathcal{A}_\kappa)}
\\
&\leq C_\kappa''M.
\end{align*}
Therefore we have for $0\leq t\leq T\leq \log 2$, 
\begin{align*}
\|\Psi U(t)\|_{D(\mathcal{A}_\kappa)}
&\leq 
\|T_\kappa(t)U_0\|_{D(\mathcal{A}_\kappa)}
+
\int_{0}^t
  \|T_\kappa(t-s)[\mathcal{N}(U(s))]\|_{D(\mathcal{A}_\kappa)}
\,ds  
\\
&\leq 
e^{t/2}\|U_0\|_{D(\mathcal{A}_\kappa)}
+
\int_{0}^t
  e^{(t-s)/2}\|\mathcal{N}(U(s))\|_{D(\mathcal{A}_\kappa)}
\,ds  
\\
&\leq 
\sqrt{2}\|U_0\|_{D(\mathcal{A}_\kappa)}
+
\sqrt{2}
  C_\kappa''Mt.
\end{align*}
Therefore there exists $T_1\in (0,\log 2]$ 
such that $\sup_{0<t<T_1}\|\Psi U(t)\|_{D(\mathcal{A}_\kappa)}\leq M$.

Next we prove continuity of $\Psi U$ on $[0,T]$.
For  $0\leq t_1< t_2\leq T$, 
\begin{align*}
\|\Psi U(t_1)-\Psi U(t_2)\|_{\mathcal{H}}
&\leq 
\|[T_\kappa(t_1)-T_\kappa(t_2)]U_0\|_{\mathcal{H}}
\\
&\quad
+
\left\|
\int_0^{t_1}T(t_1-s)N(U(s))\,ds
-
\int_0^{t_2}T(t_2-s)N(U(s'))\,ds'
\right\|_{\mathcal{H}}
\\
&\leq 
|t_1-t_2|\,\|\mathcal{A}_{k}U_0\|_{\mathcal{H}}
\\
&\quad
+
\left\|
\int_0^{t_1}[I-T(t_2-t_1)]T(t_1-s)N(U(s))\,ds
\right\|_{\mathcal{H}}
+
\left\|
\int_{t_1}^{t_2}T(t_2-s)N(U(s'))\,ds'
\right\|_{\mathcal{H}}
\\
&\leq 
|t_1-t_2|\,\|\mathcal{A}_{k}U_0\|_{\mathcal{H}}
\\
&\quad
+
e^{T/2}|t_2-t_1|
\int_0^{t_1}\|\mathcal{A}_\kappa N(U(s))\|_{\mathcal{H}}
\,ds
+
e^{T/2}
\int_{t_1}^{t_2}\|N(U(s'))\|_{\mathcal{H}}
\,ds'
\\
&\leq M'|t_1-t_2|.
\end{align*}

\noindent
{\bf (contractivity)} If $U_1=(u_1,v_1), U_2=(u_2,v_2)\in X_T$, then
\begin{align*}
\|\mathcal{N}(U_1(t))-\mathcal{N}(U_2(t))\|_{\mathcal{H}}^2
&=
\left\|
\begin{pmatrix}0\\|u_1(t)|^p-|u_2(t)|^p\end{pmatrix}
\right\|_{\mathcal{H}}^2
\\
&\leq 
p^2\int_{\R^N}(|u_1(t)|+|u_2(t)|)^{2(p-1)}|u_1(t)-u_2(t)|^2\,dx
\\
&\leq 
p^2\big\||u_1(t)|+|u_2(t)|\big\|_{L^{N(p-1)}}^{2(p-1)}
\|u_1(t)-u_2(t)\|_{L^{\frac{2N}{N-2}}}^{2}
\\
&\leq 
C(\|u_1(t)\|_{H^2}+\|u_2(t)\|_{H^2})^{2(p-1)}
\|u_1(t)-u_2(t)\|_{H^1}^{2}
\\
&\leq 
(C')^2M^{2(p-1)}
\|U_1(t)-U_2(t)\|_{\mathcal{H}}^{2}.
\end{align*}
This implies that 
\begin{align*}
\|\Psi U_1(t)-\Psi U_2(t)\|_{\mathcal{H}}
&\leq
\int_{0}^t\|T_\kappa(t-s)[\mathcal{N}(U_1(s))-\mathcal{N}(U_2(s))]\|_{\mathcal{H}}\,ds
\\
&\leq
C'M^{p-1}e^{T/2}\int_{0}^t\|U_1(s)-U_2(s)\|_{\mathcal{H}}\,ds.
\end{align*}
Consequently, taking $T_0\in (0,T_1]$ satisfying 
\[
2C'M^{p-1}Te^{T/2}\leq 1, 
\]
we obtain $d(\Psi U_1,\Psi U_2)\leq \frac{1}{2}d(U_1,U_2)$, that is, $\Psi$ is  contractive in $X_{T_0}$ and also in $Y_{T_0}$.
\end{proof}

\begin{proof}[Proof of Proposition \ref{prop:solve}]
By Lemma \ref{lem:mild}, we can find a unique fixed point $U_\infty$ of $\Psi$ in $Y_{T_0}$.
Moreover, combining the previous arguments implies 
\begin{align*}
\|\mathcal{N}(U_\infty(t_1))-\mathcal{N}(U_\infty(t_2))\|_{\mathcal{H}}
&\leq 
C'M^{p-1}
\|U_\infty(t_1)-U_\infty(t_2)\|_{\mathcal{H}}
\\
&\leq 
C''M^{p}|t_1-t_2|.
\end{align*}
Thus $\mathcal{N}(U_\infty(\cdot))$ is Lipschitz continuous on $[0,T]$. 
By \cite[Corollary 4.2.11(p.109)]{Pazybook}, we verify that 
$U_t+\mathcal{A}_\kappa U=F(U_\infty)$ has a unique strong solution $U_\infty^*$ given by 
\[
U_{\infty}^*(t)=T_\kappa(t)U_0
+
\int_{0}^tT_\kappa(t-s)\mathcal{N}(U_\infty(s))\,ds, \quad t\in [0,T_0].
\]
Since $U_\infty$ is a fixed point of $\Psi$, we obtain $U_\infty^*(t)=U_\infty(t)$ 
for $t\in [0,T_0]$. Since $U_\infty$ is a strong solution, 
we have $\pa_t U_\infty=-\mathcal{A}_\kappa U_\infty+\mathcal{N}(U_\infty)\in L^\infty(0,T;\mathcal{H})$ a.e.\ on $[0,T]$. This gives us that 
\[
\pa_tu=v\in L^{\infty}(0,T;H^2(\R^N))\cap  W^{1,\infty}(0,T;H^1(\R^N))\cap C([0,T];H^1(\R^N)), 
\]
and 
\[
\pa_tv=\Delta u-\frac{\kappa}{|x|}v+|u|^p\in L^{\infty}(0,T;H^1(\R^N))
\cap C([0,T];L^2).
\]
Hence $u\in C^2([0,T];L^2(\R^N))$ is nothing but a strong solution of 
\[
\pa_t^2u-\Delta u +\frac{\kappa}{|x|}\pa_tu=|u|^p
\]
on $[0,T]$. Uniqueness of local solutions is due to 
a proof similar  to the contractivity of $\Psi$ and 
the finite propagation property follows from the use of $Y_{T_0}$.  
\end{proof}

\section{Special solutions of linear damped wave equation}

In this section we construct special solutions of linear damped wave equation
which will be test functions for proving blowup properties. 

The following function plays an essential role in the proof of upper bound
of lifespan of solutions to \eqref{ndw}.
Similar test functions appear in Zhou--Han \cite{ZH14}. 

\begin{definition}\label{def:phi}
For $\beta>0$, set 
\[
\Phi_{\beta}(x,t)
=
(|x|+t)^{-\beta}F\left(\beta,\frac{N-1+V_0}{2},N-1;\frac{2|x|}{2+t+|x|}\right),
\]
where $F(a,b,c;z)$ is the Gauss hypergeometric function given by 
\begin{align*}
F(a,b,c;z)=\sum_{n=0}^\infty\frac{(a)_n(b)_n}{(c)_n}\,\frac{z^n}{n!}
\end{align*}
with $(d)_0=1$ and $(d)_n=\prod_{k=1}^n (d+k-1)$ for $n\in\N$.
(For further properties of $F(\cdot,\cdot,\cdot;z)$, see 
e.g., Chaper 8 in Beals--Wong \cite{BW}).
\end{definition}

For the reader's convenience we would give a derivation 
the Gauss hypergeometric function from the wave equation. 

\begin{lemma}\label{lem:textfunctions}
For $\beta>0$, $\Phi_\beta$ satisfies the wave equation with the anti-damping term
\[
\pa_t^2\Phi_{\beta}-\Delta \Phi_{\beta}-\frac{V_0}{|x|}\pa_t \Phi_{\beta}=0, 
\quad \text{in}\ 
\mathcal{Q}=\{(x,t)\in\R^N\times (0,\infty)\;;\;|x|<2+t\}. 
\]
\end{lemma}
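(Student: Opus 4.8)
The plan is to use the rotational symmetry of $\Phi_\beta$ to turn the constant-coefficient wave operator with the singular first-order terms into the ordinary differential equation satisfied by the Gauss hypergeometric function. Since $\Phi_\beta$ depends on $x$ only through $r=|x|$, the Laplacian acts as $\partial_r^2+\frac{N-1}{r}\partial_r$, and the identity to be proved is
\[
\partial_t^2\Phi_\beta-\partial_r^2\Phi_\beta-\frac{N-1}{r}\partial_r\Phi_\beta-\frac{V_0}{r}\partial_t\Phi_\beta=0 .
\]
First I would pass to the null derivatives $\partial_\pm=\partial_t\pm\partial_r$, so that $\partial_t^2-\partial_r^2=\partial_+\partial_-$, and after writing $\partial_r,\partial_t$ in terms of $\partial_\pm$ the whole operator takes the shape
\[
L=\partial_+\partial_--\frac{b}{r}\partial_++\frac{c-b}{r}\partial_-,\qquad b=\tfrac{N-1+V_0}{2},\quad c=N-1,
\]
which already exhibits the two parameters $b,c$ that must occur in $F(\beta,b,c;\,\cdot\,)$.

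The next step is to exploit the two natural variables: the combination $t+|x|$ sitting in the prefactor and the hypergeometric argument $z=\frac{2r}{2+t+r}$. Writing $D=2+t+r$ and $E=2+t-r$, the essential chain-rule facts are that the prefactor depends only on $t+r$ and is therefore \emph{annihilated} by $\partial_-$, together with
\[
\partial_- z=-\frac{2}{D},\qquad \partial_+ z=\frac{2E}{D^2},\qquad z(1-z)=\frac{2rE}{D^2},\qquad \frac{1}{r}=\frac{2}{zD}.
\]
Using these I would compute $\partial_-\Phi_\beta$, then $\partial_+\partial_-\Phi_\beta$, and $\partial_+\Phi_\beta$ entirely in terms of $w(z)=F(\beta,b,c;z)$ and $w',w''$, with coefficients that are explicit rational functions of $z$ and $D$. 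Substituting into $L$ and using the last two identities to eliminate every occurrence of $r$ and $E$, I expect the three groups of terms (the $w''$-, $w'$- and $w$-terms) to reduce, after extracting a common factor, to
\[
(\text{nonzero factor})\times\Big[z(1-z)w''+\big(c-(\beta+b+1)z\big)w'-\beta b\,w\Big].
\]
The bracket is exactly the hypergeometric equation for $F(\beta,b,c;z)$, hence vanishes identically, giving $L\Phi_\beta=0$.

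The main obstacle is the bookkeeping in this reduction: the raw coefficients mix powers of $D$ coming from the argument $z$ with powers coming from the prefactor, and the content of the lemma is precisely that they collapse onto the single triple $z(1-z)$, $c-(\beta+b+1)z$, $-\beta b$. Matching the \emph{zeroth-order} ($w$) coefficient is the most delicate point, since this is where the choices of the second and third parameters $\tfrac{N-1+V_0}{2}$ and $N-1$ and of the constant shift in the argument are forced; if the direct collection is awkward, a robust alternative is to eliminate $w''$ by substituting the hypergeometric ODE and then check that the remaining $w'$- and $w$-contributions cancel. Finally I would remark that on $\mathcal{Q}=\{|x|<2+t\}$ one has $E>0$, hence $z\in(0,1)$, so that $F(\beta,b,c;z)$ and all the manipulations above are well defined; this is exactly where the restriction to $\mathcal{Q}$ is used.
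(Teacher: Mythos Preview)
Your plan is sound and will work: the null-coordinate decomposition $L=\partial_+\partial_--\frac{b}{r}\partial_++\frac{c-b}{r}\partial_-$ is correct, the chain-rule identities for $z,D,E$ are right, and the observation that the prefactor is annihilated by $\partial_-$ is exactly what makes the computation manageable. Carrying out the bookkeeping as you describe does collapse $L\Phi_\beta$ onto a nonzero multiple of the hypergeometric operator applied to $w=F(\beta,b,c;\cdot)$.

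The paper takes a different but equally direct route. Instead of null derivatives, it first shifts $t\mapsto t-2$ to remove the constant in the argument, then passes to the pair of variables $(t,z)$ by writing $|x|+t=\frac{2t}{2-z}$, so that the prefactor becomes $(2t)^{-\beta}(2-z)^{\beta}$. It then computes $\partial_t u,\ \partial_t^2 u,\ \partial_r u,\ \partial_r^2 u$ one at a time via $\frac{\partial z}{\partial t}=-\frac{z(2-z)}{2t}$ and $\frac{\partial z}{\partial r}=\frac{(2-z)^2}{2t}$, substitutes into the radial operator, and after multiplying through by $(2t)^{\beta+2}(2-z)^{-\beta-2}$ and using $\frac{t}{r}(2-z)^{-1}=\frac{1}{z}$ obtains $-\frac{4}{z}$ times the hypergeometric equation. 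Your approach exploits the structure of the wave operator (the factorisation $\partial_t^2-\partial_r^2=\partial_+\partial_-$ and the fact that the prefactor is constant along $\partial_-$), which makes the mixed second derivative appear in one step and keeps the parameters $b,c$ visible from the outset; the paper's approach is more mechanical but has the advantage that every intermediate quantity is written explicitly, so the matching of the zeroth-order coefficient that you flag as delicate is simply read off. Either way the identification $(a,b,c)=(\beta,\tfrac{N-1+V_0}{2},N-1)$ drops out, and your remark that $z\in(0,1)$ on $\mathcal{Q}$ is exactly the regularity input needed.
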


\begin{proof}
We can put $\Phi(x,t)=\Phi_\beta(x,t-2)$ for $t>0$. 
We start with the desired equation 
\begin{equation}\label{dw}
\pa_t^2 \Phi(x,t) -\Delta \Phi(x,t) - \frac{V_0}{|x|}\pa_t \Phi(x,t)=0, \quad\text{in}\ 
\{(x,t)\in\R^N\times (0,\infty)\;;\;|x|<t\}. 
\end{equation}
Put 
\[
u(x,t)=(|x|+t)^{-\beta}\varphi\left(\frac{2|x|}{|x|+t}\right), 
\]
Then setting $z=\frac{2|x|}{|x|+t}=2-\frac{2t}{|x|+t}$, we have $|x|+t=\frac{2t}{2-z}$ and therefore
\[
u(x,t)=
(2t)^{-\beta}(2-z)^{\beta}\varphi(z).
\]
Observing that 
\[
\frac{\pa z}{\pa t} = -\frac{2|x|}{(|x|+t)^2}=-\frac{z(2-z)}{2t}, 
\]
we have
\begin{align*}
\pa_t u 
&=-2\beta(2t)^{-\beta-1}(2-z)^{\beta}\varphi(z)
+
(2t)^{-\beta}[-\beta(2-z)^{\beta-1}\varphi(z)+(2-z)^{\beta}\varphi'(z)]\frac{\pa z}{\pa t}
\\
&=-2\beta(2t)^{-\beta-1}(2-z)^{\beta}\varphi(z)
-
(2t)^{-\beta-1}[-\beta(2-z)^{\beta-1}\varphi(z)+(2-z)^{\beta}\varphi'(z)]z(2-z).
\\
&=
-
(2t)^{-\beta-1}(2-z)^{\beta+1}
\Big[
\beta\varphi(z)
+z\varphi'(z)
\Big]
\end{align*}
and also
\begin{align*}
\pa_t^2 u
&=
(2t)^{-\beta-2}(2-z)^{\beta+2}
\Big[
(\beta+1)(\beta\varphi(z)+z\varphi'(z))
+z(\beta\varphi(z)+z\varphi'(z))'
\Big]
\\
&=
(2t)^{-\beta-2}(2-z)^{\beta+2}
\Big[
\beta(\beta+1)\varphi(z)+2(\beta+1)z\varphi'(z)+z^2\varphi''(z)
\Big].
\end{align*}
On the other hand, for radial derivative, we see from $\frac{\pa z}{\pa r} = \frac{2t}{(|x|+t)^2}=\frac{(2-z)^2}{2t}$ that 
\begin{align*}
\pa_ru
&=(2t)^{-\beta}\Big[-\beta(2-z)^{\beta-1}\varphi(z)+(2-z)^{\beta}\varphi'(z)\Big]\frac{\pa z}{\pa r}
\\
&=(2t)^{-\beta-1}(2-z)^{\beta+1}\Big[-\beta\varphi(z)+(2-z)\varphi'(z)\Big]
\end{align*}
and 
\begin{align*}
\pa_r^2u
&=
(2t)^{-\beta-2}(2-z)^{\beta+2}
\Big[
(\beta+1)\beta\varphi(z)-2(\beta+1)(2-z)\varphi'(z)
+(2-z)^2\varphi''(z)
\Big].
\end{align*}
Combining these equalities and $\frac{t}{r}(2-z)^{-1}=\frac{1}{z}$, we obtain
\begin{align*}
0&=
\left(\pa_t^2u-\pa_r^2u-\frac{N-1}{r}\pa_ru+\frac{V_0}{r}\pa_tu\right)
(2t)^{\beta+2}(2-z)^{-\beta-2}
\\
&=
\beta(\beta+1)\varphi(z)+2(\beta+1)z\varphi'(z)+z^2\varphi''(z)
-(\beta+1)\beta\varphi(z)-2(\beta+1)(2-z)\varphi'(z)+(2-z)^2\varphi''(z)
\\
&\quad\quad-\frac{N-1}{r}(2t)(2-z)^{-1}
\Big[-\beta\varphi(z)+(2-z)\varphi'(z)\Big]
-
\frac{V_0}{r}(2t)(2-z)^{-1}
\Big[
-\beta\varphi(z)
-z\varphi'(z)
\Big]
\\
&=
-4(1-z)\varphi''(z)
+4(\beta+1)\varphi'(z)
+\frac{2\beta(N-1+V_0)}{z}\varphi(z)
+\frac{2(N-1)}{z}
\Big[-(2-z)\varphi'(z)\Big]
-
2V_0\varphi'(z)
\\
&=-\frac{4}{z}
\left[
(1-z)z\varphi''(z)
+\left[N-1
-\left(1+\beta+\frac{N-1+V_0}{2}\right)z\right]\varphi'(z)
-\frac{\beta(N-1+V_0)}{2}\varphi(z)
\right].
\end{align*}
This is nothing but the Gauss hypergeometric differential equation 
\[
z(1-z)\varphi''(z)+(c-(1+a+b)z)\varphi'(z)-ab\varphi(z)=0
\]
with 
\[
(a,b,c)=
\left(\beta, \frac{N-1+V_0}{2}, N-1\right).
\]
This implies that $\varphi(z)=F(\beta,\frac{N-1+V_0}{2},N-1;z)$. 
\end{proof}

\begin{lemma}\label{lem:phi.prop}
\noindent
{\bf (i)} For every $\beta>0$ and $(x,t)\in \mathcal{Q}$, 
\[
\pa_t\Phi_\beta(x,t)=-\beta\Phi_{\beta+1}(x,t).
\]
\noindent
{\bf (ii)} If $0<\beta<\frac{N-1-V_0}{2}$, 
then there exists a constant 
$c_\beta>0$ such that for every $(x,t)\in\mathcal{Q}$, 
\[
c_\beta(2+t)^{-\beta}
\leq 
\Phi_\beta(x,t)
\leq 
c_\beta^{-1}(2+t)^{-\beta}. 
\]
\noindent
{\bf (iii)} If $\beta>\frac{N-1-V_0}{2}$, then 
there exists a constant $c_\beta'>0$ such that for every $(x,t)\in\mathcal{Q}$, 
\[
c_\beta(2+t)^{-\beta}
\left(1-\frac{|x|}{t+2}\right)^{\frac{N-1-V_0}{2}-\beta}
\leq 
\Phi_\beta(x,t)
\leq 
c_\beta^{-1}(2+t)^{-\beta}
\left(1-\frac{|x|}{t+2}\right)^{\frac{N-1-V_0}{2}-\beta}. 
\]
\end{lemma}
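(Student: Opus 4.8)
The plan is to dispatch (i) by a direct differentiation together with a single contiguous relation for $F$, and to obtain the two-sided bounds (ii)--(iii) by reducing $\Phi_\beta$ to the boundary behaviour of the hypergeometric factor $F(\beta,b,c;\cdot)$ at its singular point $z=1$, where $b=\frac{N-1+V_0}{2}$ and $c=N-1$. Throughout I use the form of $\Phi_\beta$ supplied by the construction in Lemma~\ref{lem:textfunctions} (where $\Phi(x,t)=\Phi_\beta(x,t-2)$), namely $\Phi_\beta(x,t)=w^{-\beta}F(\beta,b,c;z)$ with $w=2+t+|x|$ and $z=\frac{2|x|}{w}$; note that on $\mathcal{Q}$ one has $|x|<2+t$, hence $z\in[0,1)$ with $z\to1$ exactly as $|x|\to 2+t$, and $2+t\le w\le 2(2+t)$.

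For (i), since $\pa_t w=1$ and $2|x|=zw$, I compute $\pa_t z=-\frac{2|x|}{w^2}=-\frac{z}{w}$, so the product and chain rules give
\[
\pa_t\Phi_\beta=-\beta w^{-\beta-1}F(\beta,b,c;z)-w^{-\beta-1}z\,\pa_zF(\beta,b,c;z)=-w^{-\beta-1}\big[\beta F(\beta,b,c;z)+z\,\pa_zF(\beta,b,c;z)\big].
\]
The bracket is precisely the left-hand side of the contiguous relation $z\,\pa_zF(a,b,c;z)+aF(a,b,c;z)=aF(a+1,b,c;z)$, which follows directly from the power series via the identity $a(a+1)_n=(a)_n(a+n)$. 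Taking $a=\beta$ converts the bracket into $\beta F(\beta+1,b,c;z)$, whence $\pa_t\Phi_\beta=-\beta w^{-\beta-1}F(\beta+1,b,c;z)=-\beta\Phi_{\beta+1}$.

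For (ii) and (iii) the governing quantity is $c-a-b=N-1-\beta-\frac{N-1+V_0}{2}=\frac{N-1-V_0}{2}-\beta$. Since $\beta,b,c>0$, the series $F$ has nonnegative coefficients and $F\ge F(0)=1>0$ on $[0,1)$, so the task is only to bound $F$ from above and below as $z\to1$. In case (ii) one has $c-a-b>0$, and Gauss's summation theorem shows $F$ extends continuously and positively to $z=1$ with finite value $\frac{\Gamma(c)\Gamma(c-a-b)}{\Gamma(c-a)\Gamma(c-b)}$ (all Gamma arguments are positive because $c-a=N-1-\beta>0$, $c-b=\frac{N-1-V_0}{2}>0$, and $V_0<V_*<N-1$); hence $F$ is trapped between two positive constants on the compact $[0,1]$, and multiplying by $w^{-\beta}$, which is comparable to $(2+t)^{-\beta}$ on $\mathcal{Q}$, yields (ii). In case (iii) one has $c-a-b<0$, and the Gauss connection formula (see \cite{BW}) gives $F(a,b,c;z)\sim\frac{\Gamma(c)\Gamma(a+b-c)}{\Gamma(a)\Gamma(b)}(1-z)^{c-a-b}$ as $z\to1^-$; since $F\to1$ at $z=0$, the map $z\mapsto F(z)(1-z)^{a+b-c}$ is continuous and strictly positive on $[0,1]$, hence bounded between positive constants. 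Writing $1-z=\frac{2+t-|x|}{2+t+|x|}$ and observing that on $\mathcal{Q}$ this is comparable to $1-\frac{|x|}{t+2}=\frac{2+t-|x|}{2+t}$ (the fixed sign of the exponent $\frac{N-1-V_0}{2}-\beta$ means replacing $w$ by $2+t$ in the base costs only a $\beta$-dependent constant), I multiply again by $w^{-\beta}\asymp(2+t)^{-\beta}$ to obtain (iii).

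The main obstacle is the uniform control of $F$ over the whole range $[0,1)$, and in particular its exact blow-up rate at the light-cone boundary $z=1$ in case (iii): here one cannot use the defining series but must invoke the connection formula, and one should check the degenerate subcase in which $c-a-b$ is a negative integer, where the expansion near $z=1$ acquires logarithmic terms. These are of strictly lower order than the leading power $(1-z)^{c-a-b}$, so the claimed rate, and hence the positivity and finiteness of $\lim_{z\to1}F(z)(1-z)^{a+b-c}$, persist. By comparison (i) is routine once the contiguous relation is recognised, and the comparisons $w^{-\beta}\asymp(2+t)^{-\beta}$ and $1-z\asymp 1-\frac{|x|}{t+2}$ are immediate consequences of the defining inequality $|x|<2+t$ of $\mathcal{Q}$.
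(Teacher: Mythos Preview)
Your argument is correct, but the techniques differ from the paper's in both parts.

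For (i), the paper does not invoke the contiguous relation directly from the series as you do. Instead it sets $\psi(z)=\beta\varphi(z)+z\varphi'(z)$, uses the hypergeometric ODE satisfied by $\varphi=F(\beta,b,c;\cdot)$ to deduce that $\psi$ satisfies the hypergeometric ODE with parameters $(\beta+1,b,c)$, and then identifies $\psi$ by the initial value $\psi(0)=\beta$ together with the fact that (since $c=N-1\ge 2$) the bounded solution near $z=0$ is unique up to a constant. Your termwise verification of $aF(a,b,c;z)+z\partial_zF(a,b,c;z)=aF(a+1,b,c;z)$ via $(a+n)(a)_n=a(a+1)_n$ is shorter and entirely self-contained; the paper's ODE route is more structural but heavier.

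For (ii)--(iii), the paper bypasses Gauss's summation theorem and the connection formula altogether: it reads the two-sided bounds straight off Euler's integral representation
\[
F(a,b,c;z)=\frac{1}{B(a,c-a)}\int_0^1 s^{a-1}(1-s)^{c-a-1}(1-zs)^{-b}\,ds,\qquad c>a>0,
\]
which on $[0,1)$ is manifestly positive and whose behaviour as $z\to 1^-$ is governed by the integrability of $(1-s)^{c-a-1}(1-s)^{-b}$ near $s=1$, giving exactly the dichotomy $c-a-b\gtrless 0$. This handles both cases uniformly, yields positivity without a separate argument, and avoids the degenerate integer subcase you (correctly) had to address. Your route via asymptotics at $z=1$ is equally valid and perhaps more familiar, but requires the extra bookkeeping you supplied; the integral formula is the cleaner tool here.
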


\begin{proof}
{\bf (i)} In view of the proof of Lemma \ref{lem:textfunctions}, 
we have 
\[
\pa_t \Phi_{\beta}(x,t)=-(2t)^{-\beta-1}(2-z)^{\beta+1}[\beta \varphi(z) + z\varphi'(z)]
\]
with $s=\frac{2|x|}{2+t+|x|}$. It suffices to show that 
\begin{equation}\label{gauss.aux}
\beta \varphi(z) + z\varphi'(z)=
\beta 
F\left(\beta+1,\frac{N-1+V_0}{2},N-1;z\right),
\quad 
 z\in (0,1).
\end{equation}
Put $\psi(z)=\beta \varphi(z) + z\varphi'(z)$ for $z\in [0,1)$. 
Then by the definition of $F(\cdot,\cdot,\cdot;z)$, 
we have $\psi(0)=\beta$. On the other hand, we see from 
the gauss hypergeometric equation with 
$a=\beta$, $b=\frac{N-1+V_0}{2}$ and $c=N-1$
that 
\begin{align*}
(1-z)\psi'(z)
&=(1-z)\Big((\beta+1) \varphi'(z) + z\varphi''(z)\Big)
\\
&=(\beta+1)(1-z)\varphi'(z) + z(1-z)\varphi''(z)
\\
&=(a+1)(1-z)\varphi'(z) 
- (c-(1+a+b)z)\varphi'(z)+ab\varphi(z)
\\
&=(a+1-c)\varphi'(z) 
+bz\varphi'(z)+ab\varphi(z)
\\
&=(a+1-c)\varphi'(z) 
+b\psi(z)
\end{align*}
and therefore $(1-z)\psi'(z)-b\psi(z)=(a+1-c)\varphi'(z)$. 
The definition of $\psi$ yields 
\begin{align*}
z(1-z)\psi'(z)-bz\psi(z)
&=(a+1-c)z\varphi'(z)
\\
&=(a+1-c)\psi(z)-(a+1-c)a\varphi(z)
\end{align*} 
Differentiating the above equality, we have
\begin{align*}
z(1-z)\psi''(z)+(1-(2+b)z)\psi'(z)-b\psi(z)
&=(a+1-c)\psi'(z)-(a+1-c)a\varphi'(z)
\\
&=(a+1-c)\psi'(z)-a\Big((1-z)\psi'(z)-b\psi(z)\Big).
\end{align*}
Hence we have $z(1-z)\psi''(z)+(c-(2+a+b)z)\psi'(z)+(a+1)b\psi(z)=0$. 
Since $N\geq 2$, all bound solutions of this equation near $0$ 
can be written by $\psi(z)=h F(a+1,b,c;z)$ with $h\in \R$. 
Combining the initial value $\psi(0)=\beta$, we obtain \eqref{gauss.aux}. 

The remaining assertions {\bf (ii)} and {\bf (iii)} are a direct consequence of the integral representation formula
\begin{align*}
F(a,b,c,z)
&=
\frac{1}{B(c,c-a)}
\int_{0}^{1}
  s^{a-1}(1-s)^{c-a-1}(1-zs)^{-b}\,ds, 
  \quad 
  0\leq z<1
\end{align*}
when $c>0$ and $c-a>0$. The proof is complete. 
\end{proof}

\section{Proof of blowup phenomena}

In this section we prove upper bound of the lifespan of solutions 
to \eqref{ndw} and its dependence of $\ep$
under the condition $0\leq V_0<V_*=\frac{(N-1)^2}{N+1}$. 

\subsection{Preliminaries for showing blowup phenomena}
We first state a criterion for derivation of 
upper bound for lifespan. 
\begin{lemma}\label{lem:blowup}
Let $H\in C^2([\sigma_0,\infty)$ be nonnegative function. 

\noindent{\bf (i)}
Assume that there exists positive constants $c,C,C'$ such that
\begin{align*}
C[H(\sigma)]^{p}\leq H''(\sigma)+c\,\frac{H'(\sigma)}{\sigma}
\end{align*}
with $H(\sigma)\geq \ep^pC\sigma^2$ and $H'(\sigma)\geq \ep^p C\sigma$. Then 
$H$ blows up before $\sigma=C''\ep^{-\frac{p-1}{2}}$ for some $C''>0$. 

\noindent{\bf (ii)}
Assume that there exists positive constants $c,C,C'$ such that
\begin{align*}
C\sigma^{1-p}[H(\sigma)]^{p}\leq H''(\sigma)+2 H'(\sigma)
\end{align*}
with $H(\sigma)\geq \ep^pC\sigma$ and $H'(\sigma)\geq \ep^p C$. Then 
$H$ blows up before $\sigma=C''\ep^{-p(p-1)}$ for some $C''>0$. 
\end{lemma}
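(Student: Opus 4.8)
This lemma is purely an ODE comparison result: I am given that a nonnegative $H\in C^2$ satisfies a differential inequality together with lower bounds on $H$ and $H'$, and I must conclude that $H$ cannot remain finite past an explicit time scaling with $\ep$. The two parts are structurally identical, so I will describe the plan for part (i) and indicate the modification for part (ii).

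\textbf{Plan for part (i).}
The plan is to reduce the differential inequality to a single first-order inequality for $H$ alone, then integrate. First I would rewrite the right-hand side in divergence form: since
\[
H''(\sigma)+c\,\frac{H'(\sigma)}{\sigma}
=\sigma^{-c}\bigl(\sigma^{c}H'(\sigma)\bigr)',
\]
the hypothesis becomes $C\sigma^{c}[H(\sigma)]^{p}\leq \bigl(\sigma^{c}H'(\sigma)\bigr)'$. Integrating from $\sigma_0$ to $\sigma$ and using that $H\geq \ep^{p}C\sigma^{2}$ gives a lower bound on $\sigma^{c}H'(\sigma)$ of the form $\ep^{p^{2}}(\text{const})\,\sigma^{c+2p+1}$, i.e. a polynomial lower bound $H'(\sigma)\gtrsim \ep^{p^{2}}\sigma^{2p+1}$ once the contribution of the initial data is absorbed into the assumed lower bound $H'\geq \ep^{p}C\sigma$. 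Next I would multiply the original inequality by $H'$ (which is positive), so that the left side becomes $CH^{p}H'=\frac{C}{p+1}(H^{p+1})'$ and the right side is controlled by $H''H'+c\sigma^{-1}(H')^{2}=\frac12\bigl((H')^{2}\bigr)'+\cdots$; this produces a lower bound for $(H')^{2}$ in terms of $H^{p+1}$, hence $H'\gtrsim H^{(p+1)/2}$. This separable inequality $H'/H^{(p+1)/2}\gtrsim(\text{const})$ integrates to show that $H^{-(p-1)/2}$ decays at a fixed positive rate, so $H^{-(p-1)/2}$ reaches zero — i.e. $H$ blows up — after a finite $\sigma$-increment. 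Tracking the $\ep$-dependence through the initial values $H(\sigma_0)\geq \ep^{p}C\sigma_0^{2}$ yields the blowup time $\sigma\lesssim \ep^{-(p-1)/2}$.

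\textbf{Part (ii).}
Here the operator $H''+2H'$ suggests the integrating factor $e^{2\sigma}$, since $H''+2H'=e^{-2\sigma}\bigl(e^{2\sigma}H'\bigr)'$ only up to the lower-order term; more precisely I would use that $H''+2H'\leq e^{-2\sigma}(e^{2\sigma}H')'$ fails to be an identity, so I would instead set $G=e^{2\sigma}H'$ or work directly with the substitution that converts the constant-coefficient left side into an exact derivative. The cleaner route is to note $(e^{2\sigma}H')'=e^{2\sigma}(H''+2H')\geq C e^{2\sigma}\sigma^{1-p}H^{p}$, integrate, and combine with the linear lower bound $H\geq \ep^{p}C\sigma$ to obtain a self-improving inequality; the logarithmic loss in the exponent (giving $\ep^{-p(p-1)}$ rather than a pure power) comes from the weight $\sigma^{1-p}$ interacting with the exponential, and I would track it by the same multiply-by-$H'$ and separate-variables scheme as in (i).

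\textbf{Expected main obstacle.}
The routine parts are the integrating factor and the final separation of variables. The delicate point is bookkeeping the constants so that the two assumed lower bounds (one on $H$, one on $H'$) feed correctly into the bootstrap: I must ensure that after the first integration the contribution of the initial data at $\sigma_0$ does not dominate and spoil the $\ep$-power, which is exactly why both $H(\sigma)\geq \ep^{p}C\sigma^{2}$ and $H'(\sigma)\geq \ep^{p}C\sigma$ (resp.\ the linear versions in (ii)) are hypothesized rather than just one of them. Getting the sharp exponent $-(p-1)/2$ in (i) and $-p(p-1)$ in (ii) — as opposed to a non-sharp power — hinges on choosing the correct intermediate monomial bound for $H'$ before the Riccati-type separation, and that is where I would spend the most care.
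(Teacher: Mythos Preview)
The paper does not actually prove this lemma: its entire proof is a citation to Zhou--Han \cite{ZH14}, specifically their Lemma~2.1 together with the argument in their Section~3. So any direct argument you give already goes beyond what the paper supplies.

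Your sketch is in the right spirit, but the ``multiply by $H'$'' step in part~(i) has a concrete gap. After multiplying $CH^p \leq H'' + c\sigma^{-1}H'$ by $H'$ and integrating, the term you hide in ``$\cdots$'' becomes $+c\int_{\sigma_1}^\sigma s^{-1}(H'(s))^2\,ds$ on the \emph{upper} side of the inequality, and hence appears with a \emph{minus} sign in your lower bound for $(H'(\sigma))^2$. This integral is not obviously dominated by the $H^{p+1}$ contribution, so you cannot conclude $H' \gtrsim H^{(p+1)/2}$ as written. One clean fix is to multiply the divergence-form inequality $(\sigma^c H')' \geq C\sigma^c H^p$ by $\sigma^c H'$ rather than by $H'$, and then localize to a dyadic window $[\sigma_1, 2\sigma_1]$ on which the weight ratio $(\sigma_1/\sigma)^{2c}$ is bounded below; this yields a genuine Riccati inequality on that window. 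Your remark about ``tracking the $\ep$-dependence through the initial values $H(\sigma_0)$'' is likewise too quick: separating variables from a \emph{fixed} $\sigma_0$ gives a blowup time of order $H(\sigma_0)^{-(p-1)/2}\sim\ep^{-p(p-1)/2}$, not $\ep^{-(p-1)/2}$. The sharp exponent requires starting the Riccati argument from $\sigma_1 \sim \ep^{-(p-1)/2}$ and invoking the hypothesis $H(\sigma_1) \geq \ep^p C\sigma_1^2$ at that scale --- which is exactly why the lemma assumes the lower bounds on all of $[\sigma_0,\infty)$ rather than only at the initial time.
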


\begin{proof}
The assertion follows from \cite[Lemma 2.1]{ZH14} with 
the argument in \cite[Section 3]{ZH14}.
\end{proof}

We focus our eyes to the following functionals. 
\begin{definition}\label{def:G_beta}
For $\beta\in (0,\frac{N-1-V_0}{2})$, define the following three functions
\begin{align*}
G_\beta(t)
&:=
\int_{\R^N}
  |u(x,t)|^{p}
  \Phi_{\beta}(x,t)
\,dx,\quad t\geq 0,
\\
H_\beta(t)
&:=
\int_0^t
  (t-s)(2+s)G_\beta(t)
\,ds, 
\quad t\geq 0,
\\
J_\beta(t)
&:=
\int_0^t
  (2+s)^{-3}H_\beta(t)
\,ds, 
\quad t\geq 0. 
\end{align*}
Note that we can see from Lemma \ref{lem:phi.prop} {\bf (ii)} that 
$G_\beta(t)\approx (2+t)^{-\beta}\|u(t)\|_{L^p(\R^N)}^p$. 
\end{definition}

\begin{lemma}\label{lem:sufficient}
If $u_\ep$ is a solution of \eqref{ndw} in Proposition \ref{prop:solve}
with parameter $\ep>0$, 
then $J_\beta$ does not blow up until $\lifespan (u_\ep)$. 
\end{lemma}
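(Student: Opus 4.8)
The plan is to show that for every $T$ strictly less than $\lifespan(u_\ep)$ the value $J_\beta(T)$ is finite; since $J_\beta$ is nonnegative, continuous, and nondecreasing in $t$ (its integrand $(2+s)^{-3}H_\beta(s)$ is nonnegative because $\Phi_\beta>0$ by Lemma \ref{lem:phi.prop}{\bf (ii)}), this finiteness on every compact subinterval is exactly the statement that $J_\beta$ does not blow up before the lifespan. Fix such a $T$. By Proposition \ref{prop:solve} the solution belongs to $S_T$, and in particular $u_\ep\in C([0,T];H^2(\R^N))$, so that $M_T:=\sup_{0\le t\le T}\|u_\ep(t)\|_{H^2}$ is finite.

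The first step is to control $G_\beta$ uniformly on $[0,T]$. I would use the restriction on $p$ from Proposition \ref{prop:solve} to invoke the Sobolev embedding $H^2(\R^N)\hookrightarrow L^p(\R^N)$ (together with the compact support of $u_\ep(t)$, which handles the range $p<2$), valid since $p<\frac{N-2}{N-4}<\frac{2N}{N-4}$ when $N\ge 5$ and for all $p<\infty$ when $N=3,4$; this yields $\|u_\ep(t)\|_{L^p}\le C\|u_\ep(t)\|_{H^2}\le CM_T$. The essential structural point is that $\Phi_\beta$ is bounded on the support of the solution: by the finite propagation property in Proposition \ref{prop:solve} one has $\supp u_\ep(t)\subset\overline{B(0,1+t)}$, and since $|x|\le 1+t<2+t$ there, the support lies strictly inside the region $\mathcal{Q}$ on which Lemma \ref{lem:textfunctions} and Lemma \ref{lem:phi.prop} are available. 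For $\beta\in(0,\frac{N-1-V_0}{2})$ the upper bound of Lemma \ref{lem:phi.prop}{\bf (ii)} then gives $\Phi_\beta(x,t)\le c_\beta^{-1}(2+t)^{-\beta}$ on this support, whence
\[
G_\beta(t)\le c_\beta^{-1}(2+t)^{-\beta}\|u_\ep(t)\|_{L^p}^{p}\le C(2+t)^{-\beta}M_T^{p}
\]
for all $t\in[0,T]$, so $G_\beta$ is continuous and bounded on $[0,T]$.

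Once $G_\beta$ is bounded on $[0,T]$, the remaining two steps are purely a matter of integrating continuous functions over a finite interval. From $H_\beta(t)=\int_0^t(t-s)(2+s)G_\beta(s)\,ds$ one obtains $H_\beta(t)\le\big(\sup_{[0,T]}G_\beta\big)\int_0^t(t-s)(2+s)\,ds<\infty$, and likewise $J_\beta(t)=\int_0^t(2+s)^{-3}H_\beta(s)\,ds$ is finite and continuous on $[0,T]$. Letting $T\uparrow\lifespan(u_\ep)$ shows $J_\beta(t)<\infty$ for every $t<\lifespan(u_\ep)$, which is the claim.

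I do not expect a genuine obstacle here: the lemma is essentially a well-definedness statement, ensuring that the test-function functionals remain honest finite quantities along the flow before one extracts the differential inequality that forces blowup. Everything reduces to the regularity $u_\ep\in C([0,T];H^2)$, the Sobolev embedding into $L^p$, and the boundedness of $\Phi_\beta$ on $\mathcal{Q}$. The only point that must be verified with care is that $\supp u_\ep(t)$ stays inside $\mathcal{Q}$ so that the estimates of Lemma \ref{lem:phi.prop} apply, which is precisely what the shift from $t$ to $2+t$ in the definition of $\Phi_\beta$ is designed to guarantee.
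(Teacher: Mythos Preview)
Your proof is correct and follows essentially the same approach as the paper: both use the Sobolev embedding $H^2(\R^N)\hookrightarrow L^p(\R^N)$ to obtain continuity and local boundedness of $\|u_\ep(t)\|_{L^p}$, hence of $G_\beta$, from which finiteness of $H_\beta$ and $J_\beta$ on $[0,\lifespan(u_\ep))$ is immediate. You have simply filled in the details (finite propagation keeping the support inside $\mathcal{Q}$, the explicit bound on $\Phi_\beta$ from Lemma~\ref{lem:phi.prop}{\bf(ii)}, and the iterated integrations) that the paper compresses into two sentences.
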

\begin{proof}
It follows from the embedding $H^2(\R^N)\to L^p(\R^N)$ 
(given by Gagliardo-Nirenberg-Sobolev inequalities) that 
$\|u_\ep(t)\|_{L^p}$ is continuous on $[0,\lifespan (u_\ep))$ 
and also $G_\beta(t)$. 
This means that $J_\beta(t)$ is finite for all 
$t\in [0,\lifespan (u_\ep))$. 
\end{proof}

\begin{lemma}\label{lem:trick}
For every $\beta>0$ and $t\geq 0$, 
\[
(2+t)^2J_\beta(t)=\frac{1}{2}\int_0^t(t-s)^2G_\beta(s)\,ds.
\]
\end{lemma}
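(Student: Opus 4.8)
The plan is to read $(2+t)^2 J_\beta(t)$ as an iterated integral over a triangle and collapse it to the single integral on the right by Fubini's theorem followed by one elementary one–variable computation. First I would unfold the definitions, inserting the expression for $H_\beta$ into that of $J_\beta$:
\[
J_\beta(t)=\int_0^t(2+\sigma)^{-3}\left(\int_0^\sigma(\sigma-s)(2+s)G_\beta(s)\,ds\right)d\sigma,
\]
an integral over the region $\{0\le s\le\sigma\le t\}$. Since the solution is continuous and $\Phi_\beta$ is continuous on $\mathcal{Q}$, the map $s\mapsto G_\beta(s)$ is continuous, hence bounded on $[0,t]$ (this finiteness is exactly the content invoked in Lemma \ref{lem:sufficient}), so every integral in sight is finite and Fubini's theorem lets me exchange the order of integration:
\[
J_\beta(t)=\int_0^t(2+s)G_\beta(s)\left(\int_s^t(2+\sigma)^{-3}(\sigma-s)\,d\sigma\right)ds.
\]

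Next I would evaluate the inner $\sigma$–integral $I(s):=\int_s^t(2+\sigma)^{-3}(\sigma-s)\,d\sigma$ in closed form. The substitution $u=2+\sigma$ rewrites the integrand as $u^{-2}-(2+s)u^{-3}$, whose antiderivatives are elementary; after inserting the limits $u=2+s$ and $u=2+t$ and simplifying I expect
\[
I(s)=-\frac{1}{2+t}+\frac{1}{2(2+s)}+\frac{2+s}{2(2+t)^2}.
\]

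Finally I would multiply through by $(2+t)^2$ and collect the factor multiplying $G_\beta(s)$. The decisive point is that the three surviving terms assemble into a perfect square,
\[
(2+s)(2+t)^2 I(s)=\frac{(2+t)^2}{2}-(2+t)(2+s)+\frac{(2+s)^2}{2}=\frac12\big[(2+t)-(2+s)\big]^2=\frac12(t-s)^2,
\]
so that substituting back gives precisely $(2+t)^2 J_\beta(t)=\frac12\int_0^t(t-s)^2 G_\beta(s)\,ds$. The computation carries no analytic subtlety; the only place requiring care is the bookkeeping in the last two displays, namely evaluating the antiderivative at both endpoints without sign errors and then recognizing the binomial $(2+t)^2-2(2+t)(2+s)+(2+s)^2$ in the collected terms. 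I expect this algebraic collapse to the square $[(2+t)-(2+s)]^2$ to be the crux of the lemma, and everything else to be routine.
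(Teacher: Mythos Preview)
Your argument is correct. Unfolding the definitions, applying Fubini over the triangle $\{0\le s\le\sigma\le t\}$, and evaluating the inner integral explicitly is exactly the elementary calculation the lemma requires; your algebra for $I(s)$ and the recognition of $(2+t)^2-2(2+t)(2+s)+(2+s)^2=(t-s)^2$ are both right.

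The paper's own proof is the same computation packaged slightly differently: it proceeds by integrating by parts twice in the $s$-variable (using $H_\beta''(s)=(2+s)G_\beta(s)$ and $H_\beta(0)=H_\beta'(0)=0$) rather than by Fubini. The hint displayed there has typos (the $1$'s should be $2$'s and the identity as written is not literally correct), but the intended mechanism is the same antiderivative you computed, so the two arguments are essentially equivalent reorganizations of one another. Your write-up is in fact the cleaner of the two.
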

\begin{proof}
This can be verified by integration by parts twice, by noting that 
\[
\frac{d}{ds}\Big((t-s)^2(1+s)^{-1}\Big)=\frac{2(1+t)^2}{(1+s)^3}. 
\]
\end{proof}

\begin{lemma}\label{base}
Let $u$ be a solution of \eqref{ndw}. Then for every $\beta>0$ and $t\geq0$, 
\begin{align}
\nonumber
\ep 
E_{\beta,0}
+
\ep 
E_{\beta,1}t
+
\int_0^t(t-s)G_\beta(s)\,ds
&=
\int_{\R^N}
   u(x,t)\Phi_\beta(x,t)
\,dx
+
2\beta
\int_0^t
\int_{\R^N}
   u(x,s)\Phi_{\beta+1}(x,s)
\,dx
\,ds
\\
\label{G-equality}
&\quad +
V_0
\int_0^t
\int_{\R^N}
   \frac{1}{|x|}u(x,t)\Phi_\beta(x,t)
\,dx
\,ds, 
\end{align}
where 
\begin{align*}
E_{\beta,0}
&=
\int_{\R^N}
   f(x)\Phi_\beta(x,0)
\,dx>0.
\\
E_{\beta,1}&=\int_{\R^N}
   g(x)\Phi_\beta(x,0)
\,dx
+
\beta\int_{\R^N}
   f(x)\Phi_{\beta+1}(x,0)
\,dx
+
V_0\int_{\R^N}
   \frac{1}{|x|}f(x)\Phi_\beta(x)
\,dx>0.
\end{align*}
\end{lemma}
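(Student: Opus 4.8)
The plan is to read \eqref{G-equality} as the weak (test-function) formulation of \eqref{ndw} against the specific multiplier $\chi(x,s)=(t-s)\Phi_\beta(x,s)$ on the slab $\R^N\times(0,t)$, for each fixed $t\geq0$. Writing $\mathcal{L}u=\pa_t^2u-\Delta u+\frac{V_0}{|x|}\pa_tu$, so that $\mathcal{L}u=|u|^p$, I would multiply by $\chi$ and integrate; since $G_\beta(s)=\int_{\R^N}|u|^p\Phi_\beta\,dx$, the pairing of the right-hand side is exactly $\int_0^t(t-s)G_\beta(s)\,ds$, the third term on the left of \eqref{G-equality}. Everything else is produced by transposing the derivatives off $u$ onto $\chi$.

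The formal adjoint of $\mathcal{L}$ is $\mathcal{L}^*v=\pa_t^2v-\Delta v-\frac{V_0}{|x|}\pa_tv$ (the damping sign flips, and the time-independent weight $|x|^{-1}$ is unaffected), which is precisely the anti-damped operator solved by $\Phi_\beta$ inside $\mathcal{Q}$ by Lemma \ref{lem:textfunctions}. A direct computation, using $\pa_s\Phi_\beta=-\beta\Phi_{\beta+1}$ from Lemma \ref{lem:phi.prop}(i), gives
\[
\mathcal{L}^*\chi=(t-s)\Bigl(\pa_s^2\Phi_\beta-\Delta\Phi_\beta-\frac{V_0}{|x|}\pa_s\Phi_\beta\Bigr)-2\pa_s\Phi_\beta+\frac{V_0}{|x|}\Phi_\beta=2\beta\Phi_{\beta+1}+\frac{V_0}{|x|}\Phi_\beta,
\]
the bracket vanishing because $\Phi_\beta$ solves the homogeneous equation. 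Pairing $u$ with $\mathcal{L}^*\chi$ therefore reproduces exactly the two remaining volume terms on the right of \eqref{G-equality}, namely $2\beta\int_0^t\!\int u\Phi_{\beta+1}$ and $V_0\int_0^t\!\int\frac{1}{|x|}u\Phi_\beta$. The support property of Proposition \ref{prop:solve}, $\supp u(s)\subset\overline{B(0,1+s)}$, keeps the integration strictly inside $\mathcal{Q}=\{|x|<2+s\}$, where $\Phi_\beta$ genuinely solves the equation, so this cancellation is legitimate.

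The difference between $\int_0^t\!\int(\mathcal{L}u)\chi$ and $\int_0^t\!\int u(\mathcal{L}^*\chi)$ is a sum of boundary terms: the spatial ones vanish by compact support of $u$, and the temporal ones are $\int_{\R^N}\bigl[\pa_s u\,\chi-u\,\pa_s\chi+\frac{V_0}{|x|}u\,\chi\bigr]_{s=0}^{s=t}\,dx$. At $s=t$ one has $\chi=0$ and $\pa_s\chi=-\Phi_\beta(x,t)$, so this endpoint contributes $\int_{\R^N}u(x,t)\Phi_\beta(x,t)\,dx$, the leading right-hand term. At $s=0$, inserting $u(\cdot,0)=\ep f$, $\pa_su(\cdot,0)=\ep g$ and $\pa_s\chi(\cdot,0)=-\Phi_\beta(\cdot,0)-\beta t\,\Phi_{\beta+1}(\cdot,0)$ and grouping by powers of $t$ produces precisely $\ep E_{\beta,0}+\ep E_{\beta,1}t$; moving these to the left yields \eqref{G-equality}. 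Positivity of $E_{\beta,0}$ and $E_{\beta,1}$ then follows from $f,g\geq0$, $g\not\equiv0$ and the strict positivity of $\Phi_\beta$ from Lemma \ref{lem:phi.prop}(ii).

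The main difficulty is not the algebra but the rigorous justification of the integration by parts, since $|x|^{-1}$ is singular at the origin and $u$ carries only the regularity of Proposition \ref{prop:solve}. I would excise a ball $B(0,\delta)$ and let $\delta\to0$: because $N\geq3$ one has $|x|^{-1}\in L^1_{\mathrm{loc}}$, and the surface integrals over $\pa B(0,\delta)$ are controlled by $\delta^{N-2}$ times bounded quantities, hence vanish, while the $H^2\times H^1$ regularity of $u$ makes all transposed integrals convergent. For the weaker class $C([0,T);H^1(\R^N)\cap L^p)$ noted in the remark, one instead simply \emph{takes} \eqref{G-equality} tested against $\chi$ as the definition of a weak solution, so no integration by parts is required.
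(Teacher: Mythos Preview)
Your proof is correct and uses the same ingredients as the paper's: integration by parts in space, the fact that $\Phi_\beta$ solves the adjoint (anti-damped) equation (Lemma~\ref{lem:textfunctions}), and the identity $\pa_t\Phi_\beta=-\beta\Phi_{\beta+1}$ (Lemma~\ref{lem:phi.prop}(i)). The only difference is organizational---the paper first writes $G_\beta(t)$ as a total time derivative and then integrates twice in $t$, whereas you collapse both integrations into a single test against the multiplier $(t-s)\Phi_\beta$; your added care about the singularity at $x=0$ is a point the paper leaves implicit.
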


\begin{proof}
By the equation in \eqref{ndw} 
we see from integration by parts that
\begin{align*}
G_\beta(t)
&=
\int_{\R^N}
\left(
  \pa_t^2u(t)-\Delta u(t)+\frac{V_0}{|x|}\pa_tu(t)
\right)\Phi_\beta(t)
\,dx
\\
&=
\int_{\R^N}
\left(
  \pa_t^2u(t)+\frac{V_0}{|x|}\pa_tu(t)
\right)\Phi_\beta(t)
\,dx
-
\int_{\R^N}u(t)(\Delta \Phi_\beta(t))\,dx.
\end{align*}
Using Lemma \ref{lem:textfunctions}, we have
\begin{align*}
G_\beta(t)&=
\int_{\R^N}
\left(
  \pa_t^2u(t)+\frac{V_0}{|x|}\pa_tu(t)
\right)\Phi_\beta(t)
\,dx
-
\int_{\R^N}u(t)
\left(\pa_t^2\Phi_\beta(t)-\frac{V_0}{|x|}\pa_t\Phi_\beta(t)\right)\,dx
\\
&=
\frac{d}{dt}
\left[
\int_{\R^N}\left(\pa_t u(t)\Phi_\beta(t)-u(t)\pa_t\Phi_\beta(t)\right)\,dx
+
V_0\int_{\R^N}\frac{1}{|x|}u(t)\Phi_\beta(t)\,dx\right].
\end{align*}
Noting that Lemma \ref{lem:phi.prop} {\bf (i)} 
(the formula $\pa_t\Phi_\beta=-\beta\Phi_{\beta+1}$), we have
\begin{align*}
\ep E_{\beta,1}+
\int_0^t
G_\beta(s)\,ds
&= 
\int_{\R^N}\left(\pa_t u(t)\Phi_\beta(t)-u(t)\pa_t\Phi_\beta(t)\right)\,dx
+
V_0\int_{\R^N}\frac{1}{|x|}u(t)\Phi_\beta(t)\,dx
\\
&= 
\frac{d}{dt}\left[\int_{\R^N}u(t)\Phi_\beta(t)\,dx\right]
+
2\beta\int_{\R^N}u(t)\Phi_{\beta+1}(t)\,dx
+
V_0\int_{\R^N}\frac{1}{|x|}u(t)\Phi_\beta(t)\,dx.
\end{align*}
Integrating it again, we obtain \eqref{G-equality}.
\end{proof}

The following lemma makes sense when 
$\frac{2}{N-1-V_0}<p_0(N+V_0)$ which is equivalent
to $0\leq V_0<\frac{(N-1)^2}{N+1}$.
\begin{lemma}\label{base2}
Assume $\frac{N}{N-1}<p<\infty$ and $0\leq V_0<\frac{(N-1)^2}{N+1}$.
{\rm (i)}\ Let $q>1$ satisfy $\max\{p,\frac{2}{N-1-V_0}\}<q< \infty$
and put 
\[
\beta=\frac{N-1-V_0}{2}-\frac{1}{q}\in \left(0,\frac{N-1-V_0}{2}\right).
\]
Then there 
exists a positive constant $C_1>0$ such that
\begin{align*}
&\ep E_{\beta,0}+\ep E_{\beta,1} t+
\int_0^t
(t-s)G_\beta(s)\,ds
\leq
C_1
\left[
\|u(t)\|_{L^p}
(2+t)^{\frac{N}{p'}-\beta}
+
\int_0^t
\|u(s)\|_{L^p}
(2+s)^{\frac{N}{p'}-\frac{N-1-V_0}{2}-\frac{1}{p'}}
\,ds
\right].
\end{align*}
{\rm (ii)}\ If $p>\frac{2}{N-1-V_0}$, then setting 
$
\beta_0=\frac{N-1-V_0}{2}-\frac{1}{p}\in \left(0,\frac{N-1-V_0}{2}\right),
$
one has
\begin{align*}
&
\int_0^t
(t-s)G_\beta(s)\,ds
\leq
C_1
\left[
\|u(t)\|_{L^p}
(2+t)^{\frac{N}{p'}-\beta}
+
\int_0^t
\|u(s)\|_{L^p}
(2+s)^{\frac{N}{p'}-\beta_0-1}(\log (2+s))^{\frac{1}{p'}}
\,ds
\right].
\end{align*}

\end{lemma}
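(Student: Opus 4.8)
The plan is to take the exact identity \eqref{G-equality} of Lemma \ref{base} as the starting point and to bound its right-hand side term by term. For part (ii) the two initial-data terms $\ep E_{\beta,0}+\ep E_{\beta,1}t$ are nonnegative, so discarding them on the left only weakens the inequality and is harmless. To each of the three spatial integrals on the right I would apply Hölder's inequality with exponents $p$ and $p'$, using the finite propagation property $\supp u(s)\subset\overline{B(0,1+s)}\subset\{|x|<2+s\}$ so that we remain inside $\mathcal{Q}$ and may invoke the two-sided bounds of Lemma \ref{lem:phi.prop}. This pulls out $\|u(s)\|_{L^p}$ and leaves three weighted integrals to control:
\[
\Big(\int_{|x|<1+t}\Phi_\beta^{p'}\,dx\Big)^{1/p'},\qquad \Big(\int_{|x|<1+s}\Phi_{\beta+1}^{p'}\,dx\Big)^{1/p'},\qquad \Big(\int_{|x|<1+s}|x|^{-p'}\Phi_\beta^{p'}\,dx\Big)^{1/p'}.
\]

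For the first integral, since $0<\beta<\frac{N-1-V_0}{2}$, Lemma \ref{lem:phi.prop}(ii) gives $\Phi_\beta\approx(2+t)^{-\beta}$; integrating over a ball of radius $1+t$ produces exactly the factor $(2+t)^{\frac{N}{p'}-\beta}$, i.e. the first term of the claimed bound. The decisive computation is the second integral. Here $\beta+1>\frac{N-1-V_0}{2}$, so Lemma \ref{lem:phi.prop}(iii) applies and yields the near-cone factor $(1-\frac{|x|}{t+2})^{\frac{N-1-V_0}{2}-\beta-1}$, whose exponent equals $-\frac1{q'}$ because $\frac{N-1-V_0}{2}-\beta=\frac1q$ and thus $\beta+1=\frac{N-1-V_0}{2}+\frac1{q'}$. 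Passing to polar coordinates and substituting $\rho=1-\frac{r}{t+2}$ reduces the radial integral to
\[
\int_{1/(2+t)}^{1}\rho^{-p'/q'}(1-\rho)^{N-1}\,d\rho .
\]
Under the hypothesis $q>p$ one has $p'/q'>1$, so this integral behaves like $(2+t)^{\,p'/q'-1}$, contributing $(2+t)^{\frac1{q'}-\frac1{p'}}$ after taking the $p'$-th root. Combining this with the homogeneity factor $(2+t)^{-(\beta+1)}$ and the ball-volume factor $(2+t)^{N/p'}$, the term $+\frac1{q'}$ cancels the $-\frac1{q'}$ inside $-(\beta+1)$, and the exponent collapses cleanly to $\frac{N}{p'}-\frac{N-1-V_0}{2}-\frac1{p'}$, which is precisely the second term of part (i).

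For the third ($V_0$) integral I would again use $\Phi_\beta\approx(2+s)^{-\beta}$ from Lemma \ref{lem:phi.prop}(ii); the new feature is the singular weight $|x|^{-p'}$, whose local integrability over $B(0,1+s)$ holds exactly when $p'<N$, i.e. $p>\frac{N}{N-1}$, which is assumed. The resulting power of $(2+s)$ is $\frac{N}{p'}-\beta-1$, and since $q\ge p$ forces $\beta+1\ge\frac{N-1-V_0}{2}+\frac1{p'}$, this exponent is no larger than that of the second term, so the $V_0$-contribution is absorbed into it. Part (ii) is the endpoint case $q=p$ (hence $\beta=\beta_0$, and $p>\frac{2}{N-1-V_0}$ guarantees $\beta_0>0$): now $p'/q'=1$, the $\rho$-integral degenerates to $\int_{1/(2+t)}^1\rho^{-1}(1-\rho)^{N-1}\,d\rho\approx\log(2+t)$, which is the sole source of the extra factor $(\log(2+s))^{1/p'}$.

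I expect the main obstacle to be the exponent bookkeeping for the second integral: one must correctly assemble the homogeneity, volume, and near-cone contributions and verify that the $q$-dependence cancels to the single clean exponent claimed. The sharp transition between the power-law regime (part (i), $q>p$) and the logarithmic regime (part (ii), $q=p$) in the behavior of the $\rho$-integral near its lower endpoint $\rho=1/(2+t)$ is the delicate point that governs the precise form of the final estimate, and it is exactly the hypothesis $\beta+1>\frac{N-1-V_0}{2}$ (equivalently the use of Lemma \ref{lem:phi.prop}(iii) rather than (ii)) that makes this endpoint analysis necessary.
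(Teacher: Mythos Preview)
Your proposal is correct and follows essentially the same route as the paper: start from the identity of Lemma~\ref{base}, split the right-hand side into the three integrals $I_{\beta,1}$, $I_{\beta,2}$, $I_{\beta,3}$, apply H\"older with exponents $p,p'$ on the support ball $B(0,1+s)$, use Lemma~\ref{lem:phi.prop}(ii) for $I_{\beta,1}$ and $I_{\beta,3}$ and Lemma~\ref{lem:phi.prop}(iii) with the substitution $\rho=1-\tfrac{r}{t+2}$ for $I_{\beta,2}$, and then absorb $I_{\beta,3}$ into $I_{\beta,2}$ using $q\geq p$. Your identification of the dichotomy $p'/q'>1$ versus $p'/q'=1$ as the source of the power-versus-logarithm transition between (i) and (ii) is exactly what the paper does.
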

\begin{proof}
By Lemma \ref{base} with finite propagation property, we have 
\begin{align*}
&
\ep E_{\beta,0}+\ep E_{\beta,1} t+
\int_0^t
(t-s)G_\beta(s)\,ds
=I_{\beta,1}(t)+2\beta I_{\beta,2}(t)+V_0 I_{\beta,3}(t).
\end{align*}
where 
\begin{align*}
I_{\beta,1}(t)
&=
\int_{B(0,1+t)}u(x,t)\Phi_\beta(x,t)\,dx
\\
I_{\beta,2}(t)
&=
\int_{0}^t\left(\int_{B(0,1+t)}u(x,s)\Phi_{\beta+1}(x,s)\,dx\right)\,ds
\\
I_{\beta,3}(t)
&=
\int_0^t\left(\int_{B(0,1+t)}\frac{1}{|x|}u(x,s)\Phi_\beta(x,s)\,dx\right)\,ds. 
\end{align*}
Using Lemma \ref{lem:phi.prop} {\bf (ii)}, we have 
\begin{align*}
I_{\beta,1}(t)
&\leq 
\left(\int_{B(0,1+t)}|u(x,t)|^p\,dx\right)^\frac{1}{p}
\left(\int_{B(0,1+t)}\Phi_\beta(x,t)^{p'}\,dx\right)^\frac{1}{p'}
\\
&\leq 
c_\beta^{-1}N^{-\frac{1}{p'}}|S^{N-1}|^{\frac{1}{p'}}\|u(t)\|_{L^p}(2+t)^{\frac{N}{p'}-\beta}.
\end{align*}
and 
\begin{align*}
I_{\beta,3}'(t)
&\leq 
\left(\int_{B(0,1+t)}|u(x,t)|^p\,dx\right)^\frac{1}{p}
\left(\int_{B(0,1+t)}\frac{1}{|x|^{p'}}\Phi_\beta(x,t)^{p'}\,dx\right)^\frac{1}{p'}
\\
&\leq 
c_\beta^{-1}(N-p')^{-\frac{1}{p'}}|S^{N-1}|^{\frac{1}{p'}}
\int_0^t \|u(t)\|_{L^p}(2+t)^{\frac{N}{p'}-\beta-1}\,ds.
\end{align*}
Noting that $\beta+1=\frac{N-1-V_0}{2}-\frac{1}{q'}$, 
we see from Lemma \ref{lem:phi.prop} {\bf (iii)} that 
\begin{align*}
I_{\beta,2}'(t)
&\leq 
\left(\int_{B(0,1+t)}|u(x,t)|^p\,dx\right)^\frac{1}{p}
\left(\int_{B(0,1+t)}\Phi_{\beta+1}(x,t)^{p'}\,dx\right)^\frac{1}{p'}
\\
&\leq 
(c'_\beta)^{-1}\|u(t)\|_{L^p}
(2+t)^{-\beta-1}
\left(
   \int_{B(0,1+t)}
   \left(1-\frac{|x|}{t+2}\right)^{(\frac{N-1-V_0}{2}-\beta-1)p'}\,dx\right)^\frac{1}{p'}
\\
&= 
(c'_\beta)^{-1}|S^{N-1}|^{\frac{1}{p'}}\|u(t)\|_{L^p}
(2+t)^{-\beta-1}
\left(
   \int_0^{1+t}\left(1-\frac{r}{t+2}\right)^{-\frac{p'}{q'}}r^{N-1}\,dr\right)^\frac{1}{p'}
\\
&= 
(c'_\beta)^{-1}|S^{N-1}|^{\frac{1}{p'}}\|u(t)\|_{L^p}
(2+t)^{\frac{N}{p'}-\beta-1}
\left(
   \int_{\frac{1}{2+t}}^1\rho^{-\frac{p'}{q'}}(1-\rho)^{N-1}\,d\rho\right)^\frac{1}{p'}
\\
&\leq  
(c'_\beta)^{-1}|S^{N-1}|^{\frac{1}{p'}}\left(
   \frac{p'}{q'}-1\right)^\frac{1}{p'}
\|u(t)\|_{L^p}
(2+t)^{\frac{N}{p'}-\beta-1+\frac{1}{q'}-\frac{1}{p'}}. 
\end{align*}
Thus we have 
\begin{align*}
&\ep E_{\beta,0}+\ep E_{\beta,1} t+
\int_0^t
(t-s)G_\beta(s)\,ds
\leq
C_1
\left[
\|u(t)\|_{L^p}
(2+t)^{\frac{N}{p'}-\beta}
+
\int_0^t
\|u(s)\|_{L^p}
(2+s)^{\frac{N}{p'}-\beta-1+\frac{1}{q'}-\frac{1}{p'}}
\,ds
\right].
\end{align*}
By the definition of $\beta$ we have the first desired inequality. 
The second is verified by noticing $q'/p'=1$ in the previous proof. 
\end{proof}

\subsection{Proof of Theorem \ref{main} for subcritical case 
$\max\{\frac{N}{N-1},\frac{N+3+V_0}{N+1+V_0}\}<p<p_0(N+V_0)$}

\begin{proof}
Fix $q>p$ as the following way:
\[
\frac{1}{q}\in 
\left(0,\frac{N-1-V_0}{2}\right)
\cap 
\left(\frac{(N-1+V)p-(N+1+V)}{2}, \frac{(N+1+V_0)p-(N+3+V_0)}{2}\right).
\]
The above set is not empty when 
$(p,V_0)\in \Omega_1\cup\Omega_2\cup\Omega_3$; 
note that for respective cases we can take 
\[
\frac{1}{q}=\begin{cases}
\frac{1}{p}-\delta
&
\text{if }(p,V_0)\in\Omega_1,
\\
\frac{N-1-V_0}{2}-\delta
&
\text{if }(p,V_0)\in\Omega_2,
\\
\frac{(N+1+V_0)p-(N+3+V_0)}{2}-\delta
&
\text{if }(p,V_0)\in\Omega_3
\end{cases}
\]
with arbitrary small $\delta>0$. 
Moreover, this condition is equivalent to 
\[
q>p,\quad
\beta=\frac{N-1-V_0}{2}-\frac{1}{q}>0,
\quad
\&\quad
\lambda=\frac{\gamma(N+V_0;p)}{2p}-\frac{1}{p}+\frac{1}{q}\in (0,p-1).
\]
Then we see by Lemma \ref{base2} {\bf (i)} that 
\begin{align}\label{eq:4.2.1}
&
\ep E_{\beta,0}+\ep E_{\beta,1} t+
\int_0^t
(t-s)G_\beta(s)\,ds
\leq
C_1'
\left[
G_\beta(t)^\frac{1}{p}
(2+t)^{\frac{N-\beta}{p'}}
+
\int_0^t
G_\beta(s)^\frac{1}{p}
(2+s)^{\frac{N-\beta}{p'}-\frac{1}{q}-\frac{1}{p'}}
\,ds
\right].
\end{align}
Observe that 
\begin{align*}
\frac{N-\beta}{p'}-\frac{1}{q}-\frac{1}{p}
&=
\frac{1}{p}\left(p-1
-\lambda\right)
>0, 
\end{align*}
Integrating \eqref{eq:4.2.1} over $[0,t]$, we deduce
\begin{align*}
&
\ep E_{\beta,0}t+\ep \frac{E_{\beta,1}}{2} t^2+
\frac{1}{2}\int_0^t
(t-s)^2G_\beta(s)\,ds
\\
&\leq
C_1'
\left[
\int_0^t G_\beta(s)^\frac{1}{p}
  (2+s)^{\frac{N-\beta}{p'}}
\,ds
+
\int_0^t
(t-s)G_\beta(s)^\frac{1}{p}
(2+s)^{\frac{N-\beta}{p'}-\frac{1}{q}-\frac{1}{p'}}
\,ds
\right]
\\
&\leq
C_1'
\left(
\int_0^t(2+s)G_\beta(s)\,ds
\right)^{\frac{1}{p}}
\left[
\left(
\int_0^t(2+s)^{(\frac{N-\beta}{p'}-\frac{1}{p})p'}\,ds
\right)^{\frac{1}{p'}}
+
\left(
\int_0^t(t-s)^{p'}(2+s)^{(\frac{N-\beta}{p'}-\frac{1}{q}-\frac{1}{p})p'-1}\,ds
\right)^{\frac{1}{p'}}
\right]
\\
&\leq
C_2
\left(
\int_0^t(2+s)G_\beta(s)\,ds
\right)^{\frac{1}{p}}
\left[
(2+t)^{\frac{N-\beta}{p'}-\frac{1}{p}+\frac{1}{p'}}
+
(2+t)^{1+\frac{N-\beta}{p'}-\frac{1}{q}-\frac{1}{p}}
\right]
\\
&\leq
2C_2
\left(
\int_0^t(2+s)G_\beta(s)\,ds
\right)^{\frac{1}{p}}
(2+t)^{1+\frac{p-1-\lambda}{p}}.
\end{align*}
We see from the definition of $H_\beta$ that
\begin{align*}
(2C_2)^{-p}(2+t)^{1+\lambda-2p}\left(
\ep E_{\beta,0}t+\ep \frac{E_{\beta,1}}{2} t^2
\right)^p
&\leq H_\beta'(t).
\end{align*}
Hence
\[
H_\beta'(t)\geq 
C_4\ep^{p}(2+t)^{1+\lambda}, \quad t\geq 1. 
\] 
Integrating it over $[0,t]$, we have for $t\geq 2$, 
\[
H_\beta(t)
\geq 
\int_0^t H_\beta'(s)\,ds
\geq 
\int_1^t H_\beta'(s)\,ds
\geq 
C_4\ep^{p}\int_1^t (2+s)^{\lambda}\,ds
\geq 
\frac{C_4\ep^p}{4(2+\lambda)}(2+t)^{2+\lambda}. 
\]
We see from the definition of $I_\beta$ that for $t\geq 2$,
\[
J_\beta'(t)=(2+s)^{-3}H_\beta(t)\geq\frac{C_4\ep^p}{4(2+\lambda)}(2+t)^{-1+\lambda}.
\] 
and for $t\geq 4$,
\[
J_\beta(t)=
\int_2^t J_\beta'(s)\,ds
\geq 
\frac{C_4\ep^p}{8\lambda(2+\lambda)}
(2+t)^{\lambda}.
\]
On the other hand, 
we see from Lemma \ref{lem:trick} that
\[
(2C_2)^{-p}[J_\beta(t)]^p\leq 
(2+t)^{2-\lambda}J_\beta''(t)+3(2+t)^{1-\lambda} J_\beta'(t)]. 
\]
Moreover, setting 
$J_\beta(t)=\widetilde{J}_\beta(\sigma)$, $\sigma=\frac{2}{\lambda}(2+t)^{\frac{\lambda}{2}}$,  we see 
\[
(2+t)^{1-\frac{\lambda}{2}}J_\beta'(t)=\widetilde{J}_\beta'(\sigma), 
\quad 
(2+t)^{2-\lambda}J_\beta''(t)
+\frac{2-\lambda}{2}(2+t)^{1+\lambda}J_\beta'(t)
=
\widetilde{J}_\beta''(\sigma).
\]
Then 
\begin{gather*}
C_5^{-p}[\widetilde{J}_\beta(\sigma)]^p
\leq 
\widetilde{J}_\beta''(\sigma)
+
\frac{4+\lambda}{\lambda}\sigma^{-1} 
\widetilde{J}_\beta'(\sigma), 
\quad 
\sigma\geq \sigma_0
=
\frac{2}{\lambda}2^{\frac{\lambda}{2}}, 
\\
\widetilde{J}_\beta'(\sigma)
\geq 
C_6\ep^p\sigma, 
\quad
\sigma\geq \sigma_1=\frac{2}{\lambda}4^{\frac{\lambda}{2}}, 
\\
\widetilde{J}_\beta(\sigma)
\geq 
C_6\ep^p\sigma^2, 
\quad
\sigma\geq \sigma_2=\frac{2}{\lambda}6^{\frac{\lambda}{2}}.
\end{gather*}
Consequently, by Lemma \ref{lem:blowup} {\bf (i)} we see that 
$\widetilde{J}_\beta$ blows up before $C_7\ep^{-\frac{p-1}{2}}$ and then, 
$J_\beta$ blows up before $C_7\ep^{-\frac{p-1}{\lambda}}$. 
By virtue of Lemma \ref{lem:sufficient}, we have 
$\lifespan(u_\ep)\leq C_7\ep^{-\frac{p-1}{\lambda}}$.

Finally, we remark that 
if $(p,V_0)\in\Omega_1$, then we can take $1/q=1/p-\delta$ 
for arbitrary small $\delta>0$ and then
$\lambda=\gamma(N+V_0;p)/(2p)-\frac{1}{p}+\frac{1}{q}
=\gamma(N+V_0;p)/(2p)-\delta$. 
This implies that 
\[
\lifespan u
\leq 
C_7\ep^{-\frac{2p(p-1)}{\gamma(N+V_0;p)}-\delta'}.
\]
for arbitrary small $\delta'>0$. The proof is complete.
\end{proof}

\subsection{Proof of Theorem \ref{main} for critical case 
$p=p_0(N+V_0)$}
\begin{proof}
In this case we set
\[
\beta_\delta=\frac{N-1-V_0}{2}-\frac{1}{p+\delta}\in \left(0,\frac{N-1-V_0}{2}\right).
\]
Then by Lemma \ref{base2} {\bf (ii)} with $\beta=\beta_\delta$, 
\begin{align*}
\ep E_{\beta,0}+\ep E_{\beta,2} t
&\leq 
C_1
\left[
\|u(t)\|_{L^p}
(2+t)^{\frac{N}{p'}-\beta}
+
\int_0^t
\|u(s)\|_{L^p}
(2+s)^{\frac{N}{p'}-\beta_0-1}
\,ds
\right]
\\
&\leq 
K_1
\left[
\left(G_{\beta_0}(t)\right)^{\frac{1}{p}}
(2+t)^{\frac{N-\beta_0}{p'}+(\beta_0-\beta)}
+
\int_0^t
\left(G_{\beta_0}(t)\right)^{\frac{1}{p}}
(2+s)^{\frac{N-\beta_0}{p'}-1}
\,ds
\right].
\end{align*}
Noting that $\frac{N-\beta_0}{p'}=1+\frac{1}{p}$ and  
integrating it over $[0,t]$, we have 
\begin{align*}
\ep E_{\beta_\delta,0}t+\ep \frac{E_{\beta_\delta,1}}{2} t^2
&\leq 
K_1
\left[
\int_0^t \left(G_{\beta_0}(s)\right)^{\frac{1}{p}}
(2+s)^{1+\frac{1}{p}+(\beta_0-\beta_\delta)}\,ds
+
\int_0^t
(t-s)\left(G_{\beta_0}(t)\right)^{\frac{1}{p}}
(2+s)^{\frac{1}{p}}
\,ds
\right]
\\
&\leq 
K_1
\left(
\int_0^tG_{\beta_0}(s)(1+s)\,ds
\right)^{\frac{1}{p}}
\left[
\left(
\int_0^t 
(2+s)^{p'+(\beta_0-\beta_\delta)p'}\,ds
\right)^{\frac{1}{p'}}
+
\left(
\int_0^t
(t-s)^{p'}
\,ds
\right)^{\frac{1}{p'}}
\right]
\\
&\leq 
K_2
\left(
\int_0^tG_{\beta_0}(s)(1+s)\,ds
\right)^{\frac{1}{p}}
(2+t)^{1+\frac{1}{p'}}.
\end{align*}
By the definition of $H_{\beta_0}$, we have
for $t\geq 1$, 
\[
H'_{\beta_0}(t)
\geq 
K_2^{-p}
\ep^p\left(E_{\beta_\delta,0}t+\frac{E_{\beta_\delta,1}}{2} t^2\right)^p
(2+t)^{1-2p}
\geq K_3\ep^p (2+t)
\]
and then for $t\geq 2$, 
\[
H_{\beta_0}(t)\geq \int_1^t\tilde{G}'_{\beta_0}(s)\,ds
\geq 
K_4\ep^p (2+t)^2.
\]
On the other hand, by Lemma \ref{base2} {\bf (ii)} we have 
\begin{align*}
&
\int_0^t
(t-s)G_{\beta_0}(s)\,ds
\leq
C_1
\left[
\|u(t)\|_{L^p}
(2+t)^{\frac{N}{p'}-\beta_0}
+
\int_0^t
\|u(s)\|_{L^p}
(2+s)^{\frac{N}{p'}-\beta_0-1}(\log (2+s))^{\frac{1}{p'}}
\,ds
\right].
\end{align*}
Noting $\frac{N-\beta_0}{p'}=1+\frac{1}{p}$ again 
and integrating it over $[0,t]$, we have 
\begin{align*}
&
\frac{1}{2}\int_0^t
(t-s)G_{\beta_0}(s)\,ds
\\
&\leq
K_1'
\left[
\int_0^t
G_{\beta_0}(s)^\frac{1}{p}
(2+s)^{\frac{N-\beta_0}{p'}}\,ds
+
\int_0^t
(t-s)G_{\beta_0}(s)^\frac{1}{p}
(2+s)^{\frac{N-\beta_0}{p'}-1}(\log (2+s))^{\frac{1}{p'}}
\,ds
\right]
\\
&\leq 
K_1'H_{\beta_0}'(t)^{\frac{1}{p}}
\left[
\int_0^t
(2+s)^{p'}\,ds
+
\int_0^t
(t-s)^{p'}
\log (2+s)
\,ds
\right]
\\
&\leq 
K_2'H_{\beta_0}'(t)^{\frac{1}{p}}
(2+t)^{1+\frac{1}{p'}}(\log (2+t))^\frac{1}{p'}.
\end{align*}
As in the proof of subcritial case, 
we deduce
\begin{align*}
(K_2')^{-p}(\log(2+t))^{1-p}J_{\beta_0}(t)^{p}
&\leq H_{\beta_0}'(t)(1+t)^{-1}
\\
&\leq (2+t)^2 J_{\beta_0}''(t)+3(2+t) J_{\beta_0}'(t).
\end{align*}
Here we take $J_{\beta_0}(t)=\widetilde{J}_{\beta_0}(\sigma)$ 
with $\sigma =\log (2+t)$. Since 
\[
(2+t)J_{\beta_0}'(t)=\widetilde{J}_{\beta_0}'(\sigma), 
\quad 
(2+t)^2J_{\beta_0}'(t)+(2+t)J_{\beta_0}''(t)=\widetilde{J}_{\beta_0}''(\sigma),
\]
we obtain for $\sigma\geq \sigma_0:=\log 2$, 
\[
(K_2')^{-p}\sigma^{1-p}\widetilde{J}_{\beta_0}(\sigma)^p
\leq \widetilde{J}_{\beta_0}''(\sigma)+2\widetilde{J}_{\beta_0}'(\sigma).
\]
Moreover, we have 
for $\sigma \geq \sigma_1=\log 4$, 
\begin{align*}
\widetilde{J}_{\beta_0}'(\sigma)
&=(2+t)J_{\beta_0}'(t)
\\
&=(2+t)^{-2}H_{\beta_0}(t)
\\
&\geq K_4\ep^p
\end{align*}
and therefore for $\sigma \geq \sigma_2=2\log 4$, 
\[
\widetilde{J}_{\beta_0}(\sigma)\geq \frac{K_4}{2}\ep^p\sigma. 
\]
Applying Lemma \ref{lem:blowup} {\bf (ii)} we deduce that 
$\widetilde{J}_{\beta_0}$ blows up before $\sigma=K_5\ep^{-p(p-1)}$. 
Then by definition $J_{\beta_0}$ blows up before $\exp[K_5\ep^{-p(p-1)}]$. 
Consequently, using Lemma \ref{lem:sufficient}, we obtain 
\[
\lifespan u\leq \exp[K_5\ep^{-p(p-1)}].
\]
The proof is complete. 
\end{proof}

\begin{remark}
In particular. in the proof of Theorem \ref{main} with $p=p_(N+V_0)$, 
we have used two kind of auxiliary parameters 
$1/q=\frac{N-1-V_0}{2}-\frac{1}{p}$
and $1/q=\frac{N-1-V_0}{2}-\frac{1}{p+\delta}$. 
The first choice is for deriving lower bound of the functional 
$J_{\beta_0}$ and the second is for deriving 
differential inequality for $J_{\beta_0}$. 
The first choice is essentially different from 
the idea 
of Yordanov--Zhang \cite{YZ06} to prove the lower bound of a functional.   
\end{remark}

\subsection*{Acknowedgements}
This work is partially supported 
by Grant-in-Aid for Young Scientists Research (B) 
No.16K17619 
and 
by Grant-in-Aid for Young Scientists Research (B) 
No.15K17571.



\begin{thebibliography}{30}

\bibitem{BW}
    R. Beals, R. Wong, 
    ``Special functions,''
    A graduate text. Cambridge Studies in Advanced Mathematics {\bf 126}, 
    Cambridge University Press, Cambridge, 2010.

\bibitem{DG01}
    S. Di Pomponio, V. Georgiev, 
    {\it Life-span of subcritical semilinear wave equation}, 
    Asymptot.\ Anal.\ {\bf 28} (2001), 91--114.

\bibitem{IO16}
    M. Ikeda, T. Ogawa, 
    {\it Lifespan of solutions to the damped wave equation with a critical nonlinearity},
    J.\ Differential Equations {\bf 261} (2016), 1880--1903.

\bibitem{ITY09} 
    R. Ikehata, G. Todorova, B. Yordanov, 
    {\it Critical exponent for semilinear wave equations with space-dependent potential}, 
     Funkcial.\ Ekvac.\ {\bf 52} (2009), 411--435.

 \bibitem{John79}
    F. John, 
    {\it Blow-up of solutions of nonlinear wave equations in three space    
    dimensions}, Manuscripta Math.\ {\bf 28} (1979), 235--268.

\bibitem{LTWarxiv}
    N.A. Lai, H. Takamura, K. Wakasa, 
    {\it Blow-up for semilinear wave equations with the scale invariant 
    damping and super-Fujita exponent}, 
    J.\ Differential Equations {\bf 263} (2017), 5377--5394. 

\bibitem{Pazybook}
    A. Pazy, A. 
    ``Semigroups of linear operators and applications to partial differential equations,'' 
    Applied Mathematical Sciences {\bf 44}, Springer-Verlag, New York, 1983.

\bibitem{Sideris84} 
    T.C. Sideris, 
    {\it Nonexistence of global solutions to semilinear wave equations in high dimensions}, 
    J.\ Differential Equations {\bf 52} (1984), 378--406.

\bibitem{Strauss81}
    W.A. Strauss, 
    {\it Nonlinear scattering theory at low energy}, 
    J.\ Funct.\ Anal.\ {\bf 41} (1981), 110--133.

\bibitem{TW11}
    H. Takamura, K. Wakasa, 
    {\it The sharp upper bound of the lifespan of solutions to critical semilinear wave equations in high dimensions}, 
    J.\ Differential Equations {\bf 251} (2011), 1157--1171.

\bibitem{YZ06}
    B. Yordanov, Q.S. Zhang, 
    {\it Finite time blow up for critical wave equations in high dimensions}, 
    {J.\ Funct.\ Anal.} {\bf 231} (2006), 361--374.

\bibitem{Zhou07}
    Y. Zhou, 
    {\it Blow up of solutions to semilinear wave equations with critical exponent in high dimensions}, 
    Chin.\ Ann.\ Math.\ Ser.\ B {\bf 28} (2007), 205--212. 

\bibitem{ZH14}
    Y. Zhou, W. Han,
    {\it Life-span of solutions to critical semilinear wave equations}, 
    Comm.\ Partial Differential Equations {\bf 39} (2014), 439--451. 

\end{thebibliography}
\end{document}